\def\Limsup{\mathop{{\rm Lim}\,{\rm sup}}}
\def\dom{\mbox{\rm dom}\,}
\def\cl{\mbox{\rm cl}\,}
\def\R{\mathbb{R}}
\newtheorem {theorem}{Theorem}[section]
\newtheorem {proposition}{Proposition}[section]
\newtheorem {lemma}{Lemma}[section]
\newtheorem {example}{Example}[section]
\newtheorem {definition}{Definition}[section]
\newtheorem {remark}{Remark}[section]
\title{Optimality conditions at infinity for nonsmooth minimax programming}
\author{NGUYEN VAN TUYEN$^{1}$}
\address{$^1$Department of Mathematics, Hanoi Pedagogical University 2, Xuan Hoa, Phuc Yen, Vinh Phuc, Vietnam}
\email{nguyenvantuyen83@hpu2.edu.vn; tuyensp2@yahoo.com}
\author{KWAN DEOK BAE$^2$}
\address{$^2$Department of Applied Mathematics, Pukyong National University, Busan 48513, Republic of Korea}
\email{bkduck106@naver.com}
\author{DO SANG KIM$^3$}
\address{$^3$Department of Applied Mathematics, Pukyong National University, Busan 48513, Republic of Korea}
\email{dskim@pknu.ac.kr}
\date{\today}
\keywords{Minimax programming~$\cdot$~Optimality at infinity~$\cdot$~Limiting subdifferential at infinity~$\cdot$~Normal cone   at infinity~$\cdot$~Vector optimization}
\subjclass{90C47~$\cdot$~49K35~$\cdot$~49J52~$\cdot$~90C29}
\begin{document}

\maketitle

\begin{abstract}
This paper is devoted to study of optimality conditions at infinity in nonsmooth minimax programming problems and applications. By means of the limiting subdifferential and normal cone at infinity, we dirive necessary and sufficient optimality conditions of Karush--Kuhn--Tucker type for nonsmooth minimax programming problems with constraint. The obtained results are applied to a nonsmooth vector optimization problem.
\end{abstract}

\section{Introduction} \label{introduction}

In this paper, we are interested in  optimality conditions at infinity  for the following constrained minimax optimization problem
\begin{equation*} 
	\min_{x\in\mathcal{F}} \max_{1\leq i \leq m} f_i(x),
\end{equation*}
where  the feasible set $\mathcal{F}$ is defined by
\begin{equation*} 
	\mathcal{F}:=\{x\in \Omega\;|\; g_j(x)\leq 0, j=1, \ldots, p\},
\end{equation*}
$\Omega$ is a nonempty and closed subset of $\R^n$,   $f_i$ and $g_j$ are  lower semicontinuous functions. 

Minmax optimization problems have many applications in various areas such as Chebyshev’s theory of best approximation \cite{Demyanov},  game theory \cite{Akian,Chinchuluun,Ding,Emamizadeh,Tuy,Zhang}, vector optimization \cite{An-Ngoan-Tuyen-24,An-Ngoan-Tuyen-241,Chuong-Kim,Craven,Craven-2,Hong,Huyen-Kim-Yen-24}, ... and recently in  machine learning \cite{Jiang-Chen,Jiang-Chen-2,Qian-et al}.    

The study of optimality conditions is one of the most attractive topics in  optimization  and plays a crucial role  in both the theory and practice of   optimization problems. The classical Fermat's rule (or stationary principle) says that the derivative of a differentiable function vanishes at a point where the function attains its minimum.  The nonsmooth version of Fermat's rule   states that if a lower semicontinuous function $f\colon\R^n\to\overline{\mathbb{R}}:=\R\cup\{+\infty\}$ reaches its infimum at some point $\bar x\in\dom f$, then $0\in \partial^*  f(\bar x)$, where $\partial^*$ is a subdifferential of classical type; see, for example, \cite{Mordukhovich2006,Mordukhovich2018,Penot,Rockafellar1998}.

In contrast with the above case there are many others to be found in which the functions are bounded from below but  do not have a global minimum, for example, the function $f(x)=e^x$ for all $x\in\R$. Hence,  Fermat's rule cannot be applied for such cases.  However, we can see that if a bounded from below function $f\colon\R^n\to\overline{\mathbb{R}}$ does not attain its infimum, then  there exists a sequence $x_k$ tending to infinity such that $f(x_k)$ tends to its infimum. A natural question arises: {\em What kind of necessary optimality conditions can we  derive in these cases?}

Very recently, Nguyen and Pham \cite{Tung-Son-22} proposed the concepts of {\em Clarke’s} tangent and normal cones, subgradients, Lipschitz continuity, ..., {\em at infinity} and then they derived necessary optimality conditions for optimization problems in scenarios where objective functions are bounded from below but do not attain its minimum. Shortly thereafter, Kim, Nguyen and Pham \cite{Kim-Tung-Son-23}  introduced the notions of {\em limiting}  normal cones {\em at infinity} to unbounded sets as well as {\em limiting} and {\em singular} subdifferentials {\em at infinity} for extended real-valued functions. With the  aid  of these notions, the authors have succeeded in deriving necessary optimality conditions {\em at infinity} and studying weak sharp minima properties {\em at infinity} as well as stability results for optimization problems. It should be noted here that the concepts of  Clarke and   limiting subdifferentials at infinity are different and when a function is {\em Lipschitz at infinity}, then its limiting subdifferential    is strictly smaller than the Clarke subdifferential  at infinity; see \cite[Proposition 5.6]{Kim-Tung-Son-23}.  

To the best of our knowledge, so far there have been no papers investigating  optimality conditions at infinity for minimax optimization problems. This motivates us to study optimality conditions for such problems by using  the limiting subdifferential and normal cone   at infinity. 

The rest of the paper is organized as follows.  Section \ref{Preliminaries} contains some basic  definitions from variational analysis and several auxiliary results. In particular, we present necessary and sufficient at infinity for optimality  for a general optimization problem as well as derive a result for computing subdifferentials at infinity of maximum functions. In Section \ref{Section 3}, we present necessary  optimality conditions of Karush--Kuhn--Tucker type at infinity for minimax optimization problems. A sufficient condition at infinity for the  nonemptiness and compactness of the solution set of such problems is also given in this section. The application of the obtained results to study optimality conditions at infinity for vector  optimization problems is established in Section \ref{Section 4}. In the last section, we give several concluding remarks.

\section{Preliminaries}\label{Preliminaries}
\markboth{\centerline{\it Optimality conditions at infinity for nonsmooth minimax programming}}{\centerline{\it N.V.~Tuyen and D.S.~Kim}} \setcounter{equation}{0}

\subsection{Normal cones and subdifferentials}
Let us recall some notions related to generalized differentiation from~\cite{Mordukhovich2006,Mordukhovich2018,Rockafellar1998}. Throughout the paper, we deal with the Euclidean space $\R^n$ equipped with the usual scalar product $\langle \cdot, \cdot\rangle$ and the corresponding norm $\|\cdot\|$. The closed ball centered at $x$ with radius $r$ is denoted by $\mathbb{B}_r(x)$; when $x$ is the origin of $\R^n$ we write $\mathbb{B}_r$ instead of $\mathbb{B}_r(x)$.   The topological interior, the convex hull, and the positive hull of a set   $D\subset \R^n$ are  denoted, respectively, by $\mathrm{int}\, D$, $\mathrm{co}\, D$, and $\mathrm{pos}\, D$.       

Let  $F : \R^n \rightrightarrows \R^m$ be a set-valued mapping. The \textit{domain} and the \textit{graph}  of $F$ are given, respectively, by
$${\rm dom}\,F=\{x\in \R^n \mid F(x)\not= \emptyset\} $$
and
$$ {\rm gph}\,F=\{(x,y)\in \R^n \times \R^m \mid y \in F(x)\}.$$
We say that $F$ is \textit{proper} if $\dom F \not= \emptyset.$ The limiting construction
\begin{align*}%\label{PK}
	\Limsup\limits_{x\rightarrow \bar x} F(x):=\bigg\{ y\in \mathbb{R}^m \mid \exists x_k \rightarrow \bar x, y_k \rightarrow y \ \mbox{with}\ y_k\in F(x_k), \forall k=1,2,....\bigg\}
\end{align*}
is known as the \textit{Painlev\'e--Kuratowski outer/upper limit} of $F$ at $\bar x$. 

\begin{definition}[{see \cite{Mordukhovich2006,Mordukhovich2018}}]{\rm 
		Let $\Omega$ be a nonempty subset of $\mathbb{R}^n$ and $\bar x \in \Omega$. 
		\begin{enumerate}[(i)]
			\item The \textit{regular/Fr\'echet normal cone} to $\Omega$ at $\bar x$ is defined by
			\begin{align*}
				\widehat N(\bar x, \Omega)=\left\{ v\in \mathbb{R}^n\mid \limsup\limits_{x \xrightarrow{\Omega}\bar x} \dfrac{\langle v, x-\bar x \rangle}{\|x-\bar x\|} \leq 0 \right\},
			\end{align*}
			where $x \xrightarrow{\Omega} \bar x$ means that $x \rightarrow \bar x$ and $ x\in \Omega$.
			\item The \textit{limiting/Mordukhovich  normal cone} to $\Omega$ at $\bar x$ is given by
			\begin{align*}
				N(\bar x, \Omega)=\Limsup\limits_{ x \xrightarrow{\Omega} \bar x} \widehat{N}(x, \Omega).
			\end{align*}
			We put $\widehat N(\bar x,\Omega)=N(\bar x,\Omega) =\emptyset$ if $\bar x \not\in \Omega$.
	\end{enumerate} }
\end{definition}	
Clearly, one always has
\begin{align*} 
	\widehat N(x,\Omega) \subset N(x,\Omega),\ \ \   \forall  x \in \Omega.
\end{align*}
Consider a function $f: \mathbb{R}^n\rightarrow \overline{\mathbb{R}}:=\mathbb{R}\cup\{+\infty\}$ with the \textit{effective domain} $$\mbox{dom}\, f:=\{x \in \mathbb{R}^n \mid f(x) < +\infty\}$$ and the \textit{epigraph} $$ {\rm{epi}}\, f:=\{ (x, \alpha) \in \mathbb{R}^n \times \mathbb{R} \mid \alpha \ge f (x)\}.$$  
We call $f$ a {\em proper} function if its $\mathrm{dom} f$ is a nonempty set.

\begin{definition}[{see \cite{Mordukhovich2006,Mordukhovich2018}}]\label{def21} {\rm Consider a function  $f \colon \mathbb{R}^n \to \overline{\mathbb{R}}$ and a point $\bar{x} \in {\rm dom}f$.  
		\begin{enumerate}[{\rm (i)}]
			\item The {\em regular/Fr\'echet subdifferential} of $f$ at $\bar{x}$ is 
			$$
			\widehat{\partial}f(\bar{x}):=\{ v \in \mathbb{R}^n \mid (v,-1)\in \widehat{N}_{\mathrm{epi} f}(\bar{x},f(\bar{x}))  \}.  
			$$
			\item The {\em limiting/Mordukhovich subdifferential} of $f$ at $\bar{x}$ is 
			$$
			\partial f(\bar{x}):=\{ v \in \mathbb{R}^n \mid (v,-1)\in {N}_{\mathrm{epi} f}(\bar{x},f(\bar{x}))  \}, 
			$$
			and the {\em limiting/Mordukhovich singular subdifferential} of $f$ at $\bar{x}$ is 
			$$\partial^{\infty} f(\bar{x}):=\{ v \in \mathbb{R}^n \mid \exists v_k \in 	\widehat{\partial} f(\bar{x}), \lambda_k \downarrow 0, \lambda_k v_k \to v \}.	$$
	\end{enumerate}}
\end{definition}
It is well-known that
$$
\partial f(\bar{x})=\Limsup_{x \xrightarrow{f} \bar{x}}\widehat{\partial}f(x) \supseteq \widehat{\partial}f(x),
$$
where $x \xrightarrow{f} \bar{x}$ means that $x \to \bar{x}$ and $f(x) \to f(\bar{x})$. In
particular, if $f$ is a convex function, then the subdifferentials $\widehat{\partial} f(\bar{x})$ and $\partial f(\bar{x})$
coincide with the subdifferential in the sense of convex analysis. 

For the singular subdifferential, we have
\begin{align*}
	\partial^{\infty} f(\bar{x})\subseteq \{ v \in \mathbb{R}^n \mid (v,0)\in N_{\mathrm{epi} f}(\bar{x},f(\bar{x}))  \},
\end{align*}
see, for example, \cite{Rockafellar1998}. This relationship holds with equality whenever $f$ is lower semicontinuous (l.s.c.) at $\bar{x}$.

Let $\Omega \subset \mathbb{R}^n$, we define the {\em indicator function} $\delta_{\Omega} \colon \mathbb{R}^n \to \overline{\mathbb{R}}$ by
$$\delta_{\Omega}(x) :=
\begin{cases}
	0 & \textrm{ if } x \in \Omega, \\
	+\infty & \textrm{ otherwise.}
\end{cases}$$
It holds that $\partial \delta_{\Omega} (x) =\partial^{\infty} \delta_{\Omega} (x)= N_{\Omega}(x)$ for any $x \in \Omega.$

Some calculus rules for the limiting/Mordukhovich and singular subdifferentials used later are collected in the following lemmas (see \cite{Mordukhovich2006,Mordukhovich2018,Rockafellar1998}).

\begin{lemma}[{Fermat rule, see \cite[Proposition 1.114]{Mordukhovich2006}}] \label{lema22}
	If a proper function $f \colon \mathbb{R}^n \to \overline{\mathbb{R}}$ has a local minimum at $\bar{x},$ then $0 \in \widehat{\partial}f(\bar{x})\subset \partial f(\bar{x}).$	
\end{lemma}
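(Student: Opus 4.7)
The plan is to unpack both inclusions directly from the definitions given in the excerpt, with no extra machinery required. The main content is the first inclusion $0\in\widehat{\partial}f(\bar{x})$; the second inclusion $\widehat{\partial}f(\bar{x})\subset \partial f(\bar{x})$ follows from the general fact $\widehat{N}(\cdot,\Omega)\subset N(\cdot,\Omega)$ already recorded in the excerpt, applied to $\Omega=\epi f$ at the point $(\bar x,f(\bar x))$.

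For the first inclusion, by Definition~\ref{def21}(i) we have $0\in\widehat{\partial}f(\bar x)$ if and only if $(0,-1)\in\widehat{N}((\bar x,f(\bar x)),\epi f)$. Unfolding the definition of the Fr\'echet normal cone, this amounts to showing
\begin{equation*}
\limsup_{(x,\alpha)\xrightarrow{\epi f}(\bar x,f(\bar x))} \frac{\langle (0,-1),(x-\bar x,\alpha-f(\bar x))\rangle}{\|(x-\bar x,\alpha-f(\bar x))\|}\ =\ \limsup_{(x,\alpha)\xrightarrow{\epi f}(\bar x,f(\bar x))} \frac{-(\alpha-f(\bar x))}{\|(x-\bar x,\alpha-f(\bar x))\|}\ \le\ 0.
\end{equation*}

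To prove this estimate, I would use the local minimum hypothesis: there exists $r>0$ such that $f(x)\ge f(\bar x)$ for all $x\in\mathbb{B}_r(\bar x)$. For any point $(x,\alpha)\in\epi f$ with $\|x-\bar x\|<r$ we have $\alpha\ge f(x)\ge f(\bar x)$, hence $\alpha-f(\bar x)\ge 0$ and the numerator $-(\alpha-f(\bar x))$ is nonpositive while the denominator is positive (for $(x,\alpha)\ne(\bar x,f(\bar x))$). The ratio is therefore $\le 0$ for all epigraph points in a neighborhood of $(\bar x,f(\bar x))$, which forces the $\limsup$ to be $\le 0$. This yields $(0,-1)\in\widehat{N}((\bar x,f(\bar x)),\epi f)$, i.e.\ $0\in\widehat{\partial}f(\bar x)$.

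I do not anticipate any real obstacle: the argument is a one-line application of the definitions, with the only modest care being that $f(\bar x)$ is finite (guaranteed since $\bar x$ is a local minimum of a proper function, and being a local minimum is only meaningful at points of $\dom f$), so that $(\bar x,f(\bar x))$ is a genuine point of $\epi f$ and the normal-cone calculation above is legitimate.
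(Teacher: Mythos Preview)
Your argument is correct: the direct verification that $(0,-1)\in\widehat{N}((\bar x,f(\bar x)),\epi f)$ via the local-minimum inequality, followed by the inclusion $\widehat{N}\subset N$ applied to $\epi f$, is exactly the standard proof. Note that the paper does not actually supply a proof of this lemma; it is stated as a known result with a reference to \cite[Proposition~1.114]{Mordukhovich2006}, so there is nothing further to compare against.
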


\begin{lemma}[{see \cite[Theorem 4.10]{Mordukhovich2018}}]\label{lema23} 
	Let $f_i \colon \mathbb{R}^n \to \overline{\mathbb{R}}$, $i=1,\dots,m$ with $m \geq 2$, be Lipschitz around $\bar{x}$. Then the subdifferential of maximum function holds 
	\begin{equation*}
		\partial (\max f_i) (\bar{x}) \subseteq \left\{    \sum_{i \in I(\bar{x})} \lambda_i \partial f_i(\bar{x}) \mid \lambda_i \ge 0, \sum_{i \in I(\bar{x})} \lambda_i=1  \right\}, 
	\end{equation*}	
	where $I(\bar{x}):=\{ i \in \{1,\ldots,m\}\mid f_i(\bar{x})= (\max f_i) (\bar{x})\}$. 
\end{lemma}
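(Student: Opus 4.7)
The plan is to reduce by induction on $m$ to the case $m = 2$: the recursion $\max(f_1,\ldots,f_m) = \max\bigl(\max(f_1,\ldots,f_{m-1}),\, f_m\bigr)$ preserves Lipschitz continuity and the active-index bookkeeping, so two functions suffice. Set $\varphi := \max\{f_1,f_2\}$, which is Lipschitz around $\bar x$.

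First I would dispose of the easy case in which only one index is active at $\bar x$. If $I(\bar x)=\{1\}$, then by continuity $f_1>f_2$ on a neighbourhood of $\bar x$, so $\varphi\equiv f_1$ there and therefore $\partial\varphi(\bar x)=\partial f_1(\bar x)$, which is the desired inclusion with $\lambda_1=1$.

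The substantive case is $I(\bar x)=\{1,2\}$. The key identity is the epigraphical formula
\[
\epi \varphi \;=\; \epi f_1 \,\cap\, \epi f_2,
\]
valid globally. I would apply the intersection rule for limiting normal cones at $\bar z := (\bar x,\varphi(\bar x))$,
\[
N(\bar z, \epi \varphi) \;\subseteq\; N(\bar z, \epi f_1) + N(\bar z, \epi f_2),
\]
under the qualification $N(\bar z, \epi f_1)\cap\bigl(-N(\bar z, \epi f_2)\bigr)=\{0\}$. This qualification holds because each $f_i$ is Lipschitz around $\bar x$: non-positivity of the last coordinate of epigraphical normals to an l.s.c.\ epigraph forces any $(v,t)$ in the intersection to satisfy $t=0$, and then the characterization $(v,0)\in N(\bar z, \epi f_i)\Leftrightarrow v\in\partial^{\infty} f_i(\bar x)$ combined with $\partial^{\infty} f_i(\bar x)=\{0\}$ (a standard consequence of Lipschitzness) gives $v=0$.

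For $v\in\partial\varphi(\bar x)$ one has $(v,-1)\in N(\bar z, \epi \varphi)$; decomposing
\[
(v,-1)=(v_1,-\alpha_1)+(v_2,-\alpha_2), \qquad (v_i,-\alpha_i)\in N(\bar z, \epi f_i),
\]
yields $\alpha_i\ge 0$ and $\alpha_1+\alpha_2=1$. If $\alpha_i>0$, then $v_i/\alpha_i\in\partial f_i(\bar x)$, so $v_i=\alpha_i u_i$ with $u_i\in\partial f_i(\bar x)$; if $\alpha_i=0$, Lipschitzness again forces $v_i=0$ and the term drops out. Setting $\lambda_i:=\alpha_i$ produces the claimed convex combination. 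The main obstacle I anticipate is articulating the qualification condition cleanly and handling the edge case $\alpha_i=0$; the Lipschitz hypothesis is the crucial ingredient that both activates the intersection rule (through triviality of singular subgradients) and rules out degenerate multipliers.
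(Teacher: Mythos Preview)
The paper does not supply a proof of this lemma; it is recorded as a preliminary result with a citation to \cite[Theorem~4.10]{Mordukhovich2018}. Your argument---the epigraphical identity $\epi(\max f_i)=\bigcap_i \epi f_i$, the intersection rule for limiting normal cones under the qualification guaranteed by $\partial^{\infty} f_i(\bar x)=\{0\}$, and the induction on $m$---is correct and is essentially one of the standard routes to this result; there is nothing to compare against in the present paper.
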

We now recall the Ekeland variational principle (see \cite{Ekeland-74}).
\begin{lemma}[Ekeland variational principle]\label{lema26} 
	Let $f \colon \mathbb{R}^n \to \overline{\mathbb{R}}$ be a proper l.s.c. function and bounded from below. Let $\epsilon >0$ and $x_0 \in \mathbb{R}^n$ be given such that 
	\begin{eqnarray*}
		f (x_0) &\le& \inf_{x \in \mathbb{R}^n}f (x) + \epsilon. 
	\end{eqnarray*}
	Then, for any $\lambda >0$ there is a point $x_1 \in \mathbb{R}^n$ satisfying the following conditions
	\begin{enumerate}[{\rm (i)}]
		\item $f (x_1) \le f(x_0),$
		\item $\|x_1-x_0\| \le \lambda,$ and
		\item $f (x_1) \le  f(x)+\dfrac{\epsilon}{\lambda}\|x-x_1\|$ for all $x \in \mathbb{R}^n.$
	\end{enumerate}
\end{lemma}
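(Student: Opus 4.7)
The plan is to exhibit $x_1$ as a global minimizer of the penalized function $F(x) := f(x) + \frac{\epsilon}{\lambda}\|x - x_0\|$ on $\R^n$, then read off conclusions (i)--(iii) from the minimization inequality $F(x_1) \leq F(x)$ valid for every $x \in \R^n$.

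First I would verify that $F$ is proper, lower semicontinuous, and coercive. Properness is immediate since $F(x_0) = f(x_0) < +\infty$ (the hypothesis $f(x_0)\le \inf f + \epsilon$ forces $x_0 \in \dom f$), and lower semicontinuity follows since $F$ is the sum of the l.s.c.\ function $f$ and the continuous function $x\mapsto \frac{\epsilon}{\lambda}\|x-x_0\|$. Coercivity is the crucial input afforded by the finite-dimensional setting: writing $m := \inf_{\R^n} f > -\infty$, one has $F(x) \geq m + \frac{\epsilon}{\lambda}\|x - x_0\| \to +\infty$ as $\|x\| \to \infty$. The classical Weierstrass theorem then produces a minimizer $x_1 \in \R^n$ of $F$.

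Next I would derive the three conclusions from $F(x_1) \leq F(x)$. Taking $x = x_0$ yields $f(x_1) + \frac{\epsilon}{\lambda}\|x_1 - x_0\| \leq f(x_0)$, which immediately gives (i); combining this inequality with $f(x_1) \geq \inf f$ and the hypothesis $f(x_0) \leq \inf f + \epsilon$ gives $\frac{\epsilon}{\lambda}\|x_1 - x_0\| \leq \epsilon$, i.e.\ (ii). For (iii), the general inequality $F(x_1) \leq F(x)$ rearranges to $f(x_1) - f(x) \leq \frac{\epsilon}{\lambda}\bigl(\|x - x_0\| - \|x_1 - x_0\|\bigr)$, and the reverse triangle inequality $\|x-x_0\|-\|x_1-x_0\|\le \|x-x_1\|$ bounds the right-hand side by $\frac{\epsilon}{\lambda}\|x - x_1\|$.

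The main obstacle is conceptual rather than technical: one has to recognize that the correct auxiliary functional to minimize is precisely the perturbation $F$ whose minimizer automatically encodes the perturbed-minimality condition (iii). Once this is done, finite dimensionality makes the argument essentially immediate. By contrast, in the general Banach-space version of Ekeland's principle coercivity is not available, and one must iteratively construct a $\preceq$-decreasing sequence, where $u \preceq v$ means $f(u) + \frac{\epsilon}{\lambda}\|u - v\| \leq f(v)$, then show its limit is a $\preceq$-minimal element with the required properties; in $\R^n$ the coercivity-plus-Weierstrass argument allows us to bypass this construction entirely.
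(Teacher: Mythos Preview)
Your argument is correct. The paper does not actually prove Lemma~\ref{lema26}; it merely recalls the Ekeland variational principle with a citation to \cite{Ekeland-74}, so there is no proof in the paper to compare against. Your finite-dimensional shortcut---minimizing the coercive l.s.c.\ function $F(x)=f(x)+\frac{\epsilon}{\lambda}\|x-x_0\|$ directly via Weierstrass and reading off (i)--(iii)---is a valid and elegant simplification of Ekeland's original transfinite/iterative construction, which was designed for complete metric spaces where coercivity-plus-compactness is unavailable.
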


\subsection{Normal cones and subdifferentials at infinity}
Let $\Omega$ be a {\em locally closed} subset of $\mathbb{R}^n$, i.e., for any $x\in \Omega$ there is a neighborhood $U$ of $x$ for
which $\Omega\cap U$ is closed.  Let $I$ be a nonempty subset of $\{1, \ldots, n\}.$ Consider the projection $\pi \colon \mathbb{R}^n \to \mathbb{R}^{\#I}, x := (x_1, \ldots, x_n) \mapsto (x_i)_{i \in I}$ and assume that the set $\pi(\Omega)$ is unbounded. In what follows, the notation $ \pi(x) \xrightarrow{\Omega} \infty $ means that $x \in \Omega$ and $\|\pi(x)\| \to \infty$. 

\begin{definition}[{see \cite{Kim-Tung-Son-23}}]\label{def31} {\rm 
		The {\em normal cone to the set $\Omega$ at infinity (with respect to the index set $I$)} is defined by
		\begin{eqnarray*}
			N_{\Omega}(\infty_I) &:=& \Limsup_{\pi(x) \xrightarrow{\Omega} \infty} \widehat{N}_{\Omega}(x).
		\end{eqnarray*} 
		When $I = \{1, \ldots, n\},$ we write $N_{\Omega}(\infty)$ instead of $N_{\Omega}(\infty_I).$
}\end{definition}
The following result gives  the intersection rule for
normals at infinity.
\begin{proposition}[{see \cite[Proposition 3.7]{Kim-Tung-Son-23}}]\label{intersection-normal-cones}
	Let $\Omega_1, \Omega_2$ be locally closed subsets of $\mathbb{R}^n$ satisfying the normal qualification condition at infinity
	\begin{eqnarray*}
		N_{\Omega_1}(\infty_I) \cap \big (-N_{\Omega_2}(\infty_I) \big) &=& \{0\}.
	\end{eqnarray*}
	Then we have the inclusion
	\begin{eqnarray*}
		N_{\Omega_1 \cap \Omega_2}(\infty_I)  &\subset& N_{\Omega_1}(\infty_I) + N_{\Omega_2}(\infty_I).
	\end{eqnarray*}
\end{proposition}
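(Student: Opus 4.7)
The plan is to prove the claimed sum decomposition by applying a fuzzy intersection rule for Fréchet normals at each finite base-point, then pushing to infinity under a boundedness dichotomy that the qualification condition resolves.

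Pick $v\in N_{\Omega_1\cap\Omega_2}(\infty_I)$. By Definition \ref{def31} there exist $x_k\in\Omega_1\cap\Omega_2$ with $\|\pi(x_k)\|\to\infty$ and $v_k\in\widehat N_{\Omega_1\cap\Omega_2}(x_k)$ such that $v_k\to v$. For each $k$ I invoke the fuzzy intersection rule for Fréchet normal cones (a standard finite-dimensional consequence of the extremal principle, see \cite{Mordukhovich2006}) with tolerance $\gamma_k=1/k$: this produces points $y_k^j\in\Omega_j$ with $\|y_k^j-x_k\|\le\gamma_k$ and vectors $v_k^j\in\widehat N_{\Omega_j}(y_k^j)$ satisfying $\|v_k^1+v_k^2-v_k\|\le\gamma_k$. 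Since $\|\pi(y_k^j)-\pi(x_k)\|\le\gamma_k\to 0$, we also have $\|\pi(y_k^j)\|\to\infty$ for $j=1,2$.

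I then split on whether the scalar sequence $\{\|v_k^1\|+\|v_k^2\|\}$ is bounded. In the bounded case I extract convergent subsequences $v_k^j\to v^j$; Definition \ref{def31} combined with $\|\pi(y_k^j)\|\to\infty$ gives $v^j\in N_{\Omega_j}(\infty_I)$, while $v_k^1+v_k^2\to v$ yields $v=v^1+v^2\in N_{\Omega_1}(\infty_I)+N_{\Omega_2}(\infty_I)$, which is the desired decomposition.

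In the unbounded case I pass to a subsequence along which $\lambda_k:=1/(\|v_k^1\|+\|v_k^2\|)\downarrow 0$. Then $\lambda_k v_k^j$ is bounded; extract $\lambda_k v_k^j\to w^j$ with $\|w^1\|+\|w^2\|=1$. Since $\widehat N_{\Omega_j}(y_k^j)$ is a cone, $\lambda_k v_k^j\in\widehat N_{\Omega_j}(y_k^j)$, hence $w^j\in N_{\Omega_j}(\infty_I)$. The estimates $\lambda_k\|v_k\|\to 0$ (because $v_k\to v$ is bounded) and $\lambda_k\|v_k^1+v_k^2-v_k\|\le\lambda_k\gamma_k\to 0$ together give $w^1+w^2=0$, so $w^1\in N_{\Omega_1}(\infty_I)\cap\bigl(-N_{\Omega_2}(\infty_I)\bigr)=\{0\}$ by the qualification hypothesis. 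This forces $w^1=w^2=0$, contradicting $\|w^1\|+\|w^2\|=1$; hence this case cannot occur, and the proof is complete. The main obstacle I expect is invoking a sufficiently precise finite-dimensional fuzzy intersection rule that delivers \emph{Fréchet} (not only limiting) multipliers at perturbed base points with controlled residual; the cone structure of $\widehat N$ is then what makes the homogenization step in the unbounded case land back inside $N_{\Omega_j}(\infty_I)$.
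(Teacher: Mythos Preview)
The paper does not supply a proof of this proposition: it is quoted verbatim from \cite[Proposition~3.7]{Kim-Tung-Son-23} and used as a black box. So there is no in-paper argument to compare against.

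Your argument is essentially the standard one for intersection rules of this type and is what one expects the cited source to do: pull back to a fuzzy intersection rule at each approximating base point, then run a bounded/unbounded dichotomy on the resulting multipliers, with the qualification condition at infinity ruling out the unbounded branch. The overall architecture is sound, and the use of the cone property of $\widehat N$ to keep the normalized vectors inside the Fr\'echet cones (hence inside $N_{\Omega_j}(\infty_I)$ in the limit) is exactly the right mechanism.

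The one point that deserves care is the precise form of the fuzzy intersection rule you invoke. The version delivered by the extremal principle does \emph{not} give $\|v_k^1+v_k^2-v_k\|\le\gamma_k$ directly; rather it produces a scalar $\lambda_k\ge 0$ together with $v_k^j\in\widehat N_{\Omega_j}(y_k^j)$ satisfying $\|\lambda_k v_k-v_k^1-v_k^2\|\le\gamma_k$ and a normalization such as $\max\{\lambda_k,\|v_k^1\|,\|v_k^2\|\}=1$ (see, e.g., \cite[Theorem~2.8 and its corollaries]{Mordukhovich2006} or the fuzzy intersection rule preceding \cite[Theorem~2.16]{Mordukhovich2018}). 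With this correct version the dichotomy has to be run on $\lambda_k$ first: if $\lambda_k$ stays bounded away from $0$ you divide through and proceed exactly as you wrote; if $\lambda_k\to 0$ along a subsequence, the normalization forces $(\|v_k^1\|,\|v_k^2\|)\not\to 0$ while $v_k^1+v_k^2\to 0$, and the qualification condition gives the same contradiction. You anticipated this issue in your closing remark; once the fuzzy rule is stated in its actual form, your proof goes through without further change.
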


Now let $f \colon \mathbb{R}^n \to \overline{\mathbb{R}}$ be a lower semicontinuous function and let $I := \{1, \ldots, n\} \subset \{1, \ldots, n, n + 1\}.$ To avoid triviality, we will assume that $f$ is {\em proper at infinity} in the sense that the set $\mathrm{dom} f$ is unbounded.

\begin{definition}[{see \cite{Kim-Tung-Son-23}}]\label{def41} {\rm 
		The {\em limiting/Mordukhovich and the singular subdifferentials} of $f$ at infinity are defined, respectively, by 
		\begin{eqnarray*}
			\partial f(\infty) &:=& \{u \in \mathbb{R}^n \ | \ (u, -1) \in \mathcal{N} \},\\
			\partial^{\infty} f(\infty) &:=& \{u \in \mathbb{R}^n \ | \ (u, 0) \in \mathcal{N}\},
		\end{eqnarray*}
		where $\mathcal{N} := \displaystyle \Limsup_{x \to \infty} {N}_{\textrm{epi} f}(x,f(x)).$
}\end{definition}
The following result gives limiting representations of limiting and singular subgradients at infinity.
\begin{proposition}[{see \cite[Proposition 4.4]{Kim-Tung-Son-23}}]\label{pro42} 
	The following relationships hold
	\begin{eqnarray*}
		\partial f(\infty) &=& \Limsup_{x \to \infty} \partial f(x), \label{PT421} \\
		\partial^{\infty} f(\infty)  &=& \Limsup_{x \to \infty, r \downarrow 0} r \partial f(x), \label{PT422}  \\
		\partial^{\infty} f(\infty)  &\supseteq& \Limsup_{x \to \infty} \partial^{\infty} f(x) \label{PT423} .
	\end{eqnarray*}
\end{proposition}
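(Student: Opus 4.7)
The common thread for all three assertions is to work directly with the defining formula $\mathcal{N}=\Limsup_{x\to\infty}N_{\epi f}(x,f(x))$ and to exploit two structural facts about normal cones to epigraphs: any limiting normal $(v,\alpha)\in N_{\epi f}(x,f(x))$ satisfies $\alpha\le 0$, and the cone is invariant under multiplication by positive scalars. The strategy will be, given an element $(u,\gamma)\in\mathcal{N}$ with $\gamma=-1$ or $\gamma=0$, to pick approximating normals $(v_k,\alpha_k)\to(u,\gamma)$ and rescale by $1/|\alpha_k|$ or by $-\alpha_k$, so as to land in $\partial f(x_k)$ or in $r_k\partial f(x_k)$ respectively.

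For the first identity, the inclusion $\supseteq$ is immediate: $u_k\in\partial f(x_k)$ means $(u_k,-1)\in N_{\epi f}(x_k,f(x_k))$, so if $x_k\to\infty$ and $u_k\to u$ then $(u,-1)\in\mathcal{N}$. For $\subseteq$, I will take $(u,-1)\in\mathcal{N}$, pick $x_k\to\infty$ and $(v_k,\alpha_k)\to(u,-1)$ in the corresponding normal cones, and observe that $\alpha_k<0$ eventually; the rescaled vectors $(-v_k/\alpha_k,-1)$ remain in the normal cone and converge to $(u,-1)$, placing $u$ in $\Limsup_{x\to\infty}\partial f(x)$. For the second identity, $\supseteq$ will again follow by multiplying $(u_k,-1)\in N_{\epi f}(x_k,f(x_k))$ by $r_k\downarrow 0$ to obtain $(r_k u_k,-r_k)\to(u,0)$ in $\mathcal{N}$. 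For $\subseteq$, I will take $(u,0)\in\mathcal{N}$ and $(v_k,\alpha_k)\to(u,0)$ as above, then split on the sign of $\alpha_k$. Along any cofinal subsequence with $\alpha_k<0$, setting $r_k:=-\alpha_k\downarrow 0$ and $u_k':=v_k/r_k\in\partial f(x_k)$ gives $r_k u_k'=v_k\to u$. Along a cofinal subsequence with $\alpha_k=0$, lower semicontinuity of $f$ gives $v_k\in\partial^{\infty}f(x_k)$, so the standard representation $\partial^{\infty}f(x_k)=\Limsup_{y\xrightarrow{f}x_k,\,\lambda\downarrow 0}\lambda\widehat{\partial}f(y)$ combined with a diagonal extraction produces $y_k\to\infty$, $\lambda_k\downarrow 0$, and $w_k\in\widehat{\partial}f(y_k)\subseteq\partial f(y_k)$ with $\lambda_k w_k\to u$. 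The third inclusion is the easiest: if $u_k\in\partial^{\infty}f(x_k)$ with $x_k\to\infty$ and $u_k\to u$, then by lower semicontinuity $(u_k,0)\in N_{\epi f}(x_k,f(x_k))$, and passing to the limit yields $(u,0)\in\mathcal{N}$.

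The main obstacle will be the bookkeeping in the diagonal extraction on the $\alpha_k=0$ subsequence in the converse direction of the second identity, where I must choose indices so that the escape $y_k\to\infty$ persists through the two nested limits and the rescaled regular subgradients still converge to $u$; everything else reduces to scalar rescaling of normals and a routine passage to the limit on the closed graph $(x,y)\mapsto N_{\epi f}(x,f(x))$.
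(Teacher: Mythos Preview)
The paper does not contain a proof of this proposition: it is stated with the citation ``see \cite[Proposition~4.4]{Kim-Tung-Son-23}'' and used as a black box thereafter, so there is no in-paper argument to compare against. Your proposal therefore cannot be matched to anything the authors wrote.

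That said, the outline you give is essentially the standard route to such representation formulas and is correct. A few minor remarks. For the first identity and the $\supseteq$ direction of the second, your scaling arguments are clean. In the $\subseteq$ direction of the second identity, the case split on the sign of $\alpha_k$ is the right move; the only place that deserves care is the diagonal extraction when $\alpha_k=0$ cofinally. There you invoke the equality $\partial^{\infty}f(x_k)=\{v:(v,0)\in N_{\epi f}(x_k,f(x_k))\}$, which does require lower semicontinuity of $f$ at $x_k$; this is indeed part of the standing hypothesis in the paper, so it is legitimate. The diagonalization itself is routine: for each $k$ choose $j(k)$ large enough that $\|y^k_{j(k)}-x_k\|<1$, $\lambda^k_{j(k)}<1/k$, and $\|\lambda^k_{j(k)}w^k_{j(k)}-v_k\|<1/k$; the first inequality forces $y_k\to\infty$, the second gives $\lambda_k\downarrow 0$, and the third together with $v_k\to u$ gives $\lambda_k w_k\to u$. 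Spelling this out would remove the only soft spot in the sketch. The third inclusion is exactly as you say.
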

\begin{remark}
	{\rm Let $\Omega$ be an unbounded and closed  subset of $\R^n$. Then by Proposition \ref{pro42} and \cite[Proposition 1.19]{Mordukhovich2018}, we have
		\begin{equation*} 
			\partial\delta_\Omega(\infty)=\partial^\infty\delta_\Omega(\infty)=N_\Omega(\infty).
		\end{equation*}  
		
	}
\end{remark}

We now recall the notion of the Lipschitzness at infinity for l.s.c. functions (see \cite{Kim-Tung-Son-23,Tung-Son-22}). 
\begin{definition}\label{def51}{\rm
		Let  $f \colon \mathbb{R}^n \to \mathbb{R}$ be a l.s.c function.  We say that $f$ is {\em Lipschitz at infinity} if there exist constants $L > 0$ and $R > 0$ such that
		\begin{eqnarray*}
			|f(x) - f(x')| &\le& L \|x - x'\| \ \  \textrm{ for all } \ \ x, x' \in \mathbb{R}^n \setminus \mathbb{B}_R.
		\end{eqnarray*}
}\end{definition}
The following result gives a  characterization for the  Lipschitzness at infinity of l.s.c. functions. 
\begin{proposition}[{see \cite[Proposition 5.2]{Kim-Tung-Son-23}}]\label{pro52}   
	Let $f \colon \mathbb{R}^n \to \mathbb{R}$ be a l.s.c. function. Then $f$ is Lipschitz at infinity if and only if $\partial^{\infty}f(\infty)=\{0\}.$ In this case, $\partial f(\infty)$ is nonempty compact. 
\end{proposition}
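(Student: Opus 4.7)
My plan is to prove the equivalence via the limiting representations in Proposition \ref{pro42}, namely $\partial f(\infty)=\Limsup_{x\to\infty}\partial f(x)$ and $\partial^{\infty} f(\infty)=\Limsup_{x\to\infty,\,r\downarrow 0}\,r\partial f(x)$.

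For the \emph{sufficiency} direction, suppose $f$ is Lipschitz at infinity with constants $L,R$. Then $f$ is locally Lipschitz with constant $L$ at every $x$ with $\|x\|>R$, which forces $\partial f(x)\subseteq \mathbb{B}_L$. If $u\in\partial^{\infty}f(\infty)$, by Proposition \ref{pro42} there are $x_k\to\infty$, $r_k\downarrow 0$, and $v_k\in\partial f(x_k)$ with $r_k v_k\to u$; since eventually $\|v_k\|\le L$, one gets $\|u\|\le \lim r_k L=0$, whence $u=0$.

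For the \emph{necessity} direction, assume $\partial^{\infty}f(\infty)=\{0\}$. I would first produce a uniform bound: there exist $L_0,R_0>0$ with $\partial f(x)\subseteq \mathbb{B}_{L_0}$ for all $\|x\|>R_0$. If this fails, one finds $x_k\to\infty$ and $v_k\in\partial f(x_k)$ with $\|v_k\|\to\infty$; setting $\lambda_k:=1/\|v_k\|\downarrow 0$, the sequence $\lambda_k v_k$ lies on the unit sphere, so some subsequence converges to a unit vector $u$, and Proposition \ref{pro42} places $u$ in $\partial^{\infty}f(\infty)=\{0\}$, a contradiction. An analogous normalization argument exploiting that $\partial^{\infty}f(x)$ is a cone together with $\Limsup_{x\to\infty}\partial^{\infty}f(x)\subseteq\partial^{\infty}f(\infty)$ shows $\partial^{\infty}f(x)=\{0\}$ for all $\|x\|$ large, so the classical local characterization of Lipschitz continuity via the singular subdifferential yields that $f$ is locally Lipschitz around each such $x$.

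The main task is then to upgrade the uniform subgradient bound to global Lipschitzness on $\mathbb{R}^n\setminus\mathbb{B}_{R'}$ for some $R'$. For any pair $x,y$ whose segment $[x,y]$ lies entirely in $\{z:\|z\|>R_0\}$, the Mordukhovich mean value inequality directly delivers $|f(x)-f(y)|\le L_0\|x-y\|$. For pairs whose segment crosses $\mathbb{B}_{R_0}$ one bypasses the ball via a detour inside $\{\|z\|>R_0\}$ of length at most a constant multiple of $\|x-y\|$: for $n\ge 2$, joining a radial segment to an arc on the sphere of radius $\max(\|x\|,\|y\|)$ gives a path of length at most $(\pi+1)\|x-y\|$, yielding the Lipschitz property with constant $(\pi+1)L_0$ on $\{\|z\|\ge 2R_0\}$; for $n=1$ one enlarges $R_0$ to absorb the bounded additive error between the two components of $\mathbb{R}\setminus\mathbb{B}_{R_0}$. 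This path-and-mean-value construction is where I expect the main technical work to be. Finally, nonemptiness and compactness of $\partial f(\infty)$ come essentially for free: $\partial f(\infty)=\Limsup_{x\to\infty}\partial f(x)$ is closed as an outer limit and contained in $\mathbb{B}_{L_0}$ by the uniform bound, hence compact; and for any $x_k\to\infty$, local Lipschitzness makes each $\partial f(x_k)$ nonempty, so picking $v_k\in\partial f(x_k)\subseteq\mathbb{B}_{L_0}$ and extracting a convergent subsequence produces an element of $\partial f(\infty)$.
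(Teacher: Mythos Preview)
The paper does not contain a proof of this proposition: it is quoted verbatim from \cite[Proposition~5.2]{Kim-Tung-Son-23} and used as a black box, so there is no ``paper's own proof'' to compare against. Your proposal is therefore not reproducing an argument from this paper; it is an independent proof sketch of the cited result.

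That said, your sketch is essentially sound. The sufficiency direction is immediate from Proposition~\ref{pro42} as you indicate. In the necessity direction, your two normalization arguments (producing a unit vector in $\partial^{\infty}f(\infty)$ from either an unbounded sequence $v_k\in\partial f(x_k)$ or a nonzero $w_k\in\partial^{\infty}f(x_k)$) are correct and give both the uniform bound $\partial f(x)\subseteq\mathbb{B}_{L_0}$ and local Lipschitzness of $f$ on $\{\|x\|>R_0\}$. The mean value inequality then yields the Lipschitz estimate along any segment contained in this region. Your detour argument for $n\ge 2$ is the right idea; a cleaner bound is obtained by replacing the crossed portion $[p,q]$ of the segment (where $p,q\in\partial\mathbb{B}_{R_0}$) by the short great-circle arc on $\partial\mathbb{B}_{R_0+\varepsilon}$, whose length is at most $\tfrac{\pi}{2}\|p-q\|$, giving an overall factor $\tfrac{\pi}{2}$ rather than $(\pi+1)$. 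For $n=1$ your remark is correct but could be made explicit: on each half-line $f$ is $L_0$-Lipschitz, the cross term $|f(R_0+1)-f(-R_0-1)|=:C$ is a fixed finite number (since $f$ is real-valued), and for $x>R_0$, $y<-R_0$ one has $|x-y|>2R_0$, so $|f(x)-f(y)|\le L_0|x-y|+C\le (L_0+\tfrac{C}{2R_0})|x-y|$. Finally, the nonemptiness and compactness of $\partial f(\infty)$ follow exactly as you say. One small omission: you should note that $0\in\partial^{\infty}f(\infty)$ always holds, so only the inclusion $\partial^{\infty}f(\infty)\subseteq\{0\}$ requires proof in the sufficiency direction.
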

The next result presents sum rules for both basic
and singular  subdifferentials at infinity.
\begin{proposition}[{cf. \cite[Proposition 4.9]{Kim-Tung-Son-23}}] \label{pro54}
	Let $f_1, \ldots, f_m \colon \mathbb{R}^n\to\overline{\mathbb{R}}$ be l.s.c. functions such that the following subdifferential qualification condition at infinity
	\begin{equation}\label{qualification-condition}
		\left[u_1+\ldots+u_m=0, u_i\in\partial^{\infty} f_i(\infty)\right] \Rightarrow u_i=0, i=1, \ldots, m,
	\end{equation}
	is valid. Then one has the  inclusions
	\begin{eqnarray*}
		\partial (f_1 +\ldots+ f_m)(\infty) &\subset& \partial f_1(\infty) +\ldots+ \partial  f_m(\infty), \\
		\partial^{\infty} (f_1 +\ldots+ f_m)(\infty) &\subset& \partial^{\infty} f_1(\infty) + \ldots+ \partial^{\infty} f_m(\infty).   
	\end{eqnarray*}
\end{proposition}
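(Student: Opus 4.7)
The plan is to mimic the standard proof of the Mordukhovich sum rule, carried out along sequences running off to infinity. The limiting representations in Proposition \ref{pro42} reduce both inclusions to the finite-point sum rule (see, e.g., \cite[Theorem 2.19]{Mordukhovich2018}) applied at well-chosen base points $x_k\to\infty$, followed by a passage to the limit. Two technical points must be addressed at each stage: first, the finite-point qualification $\bigl[\sum_{i}v_i=0,\ v_i\in\partial^{\infty}f_i(x_k)\bigr]\Rightarrow v_i=0$ must hold for all sufficiently large $k$; second, the resulting decompositions must stay bounded so that subsequential limits exist.

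For the first inclusion, I would fix $u\in\partial(f_1+\ldots+f_m)(\infty)$ and use Proposition \ref{pro42} to pick $x_k\to\infty$ with $u_k\in\partial(f_1+\ldots+f_m)(x_k)$ and $u_k\to u$. If the finite-point qualification fails along a subsequence, we obtain $v_i^k\in\partial^{\infty}f_i(x_k)$ summing to zero and not all zero. Setting $t_k:=\max_i\|v_i^k\|>0$ and using that $\partial^{\infty}f_i(x_k)$ is a cone, the vectors $v_i^k/t_k$ still sum to zero and at least one has unit norm; a subsequence and Proposition \ref{pro42} (third relation) then produce $w_i\in\partial^{\infty}f_i(\infty)$ with $\sum_i w_i=0$ and $\max_i\|w_i\|=1$, contradicting \eqref{qualification-condition}. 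So for $k$ large, the classical sum rule yields $u_k=u_{1,k}+\ldots+u_{m,k}$ with $u_{i,k}\in\partial f_i(x_k)$.

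Next I would prove that the components $u_{i,k}$ stay bounded. If $T_k:=\max_i\|u_{i,k}\|\to\infty$, then $(1/T_k)u_{i,k}$ remains bounded, and along a subsequence converges to some $w_i$; because $1/T_k\downarrow 0$, Proposition \ref{pro42} (second relation) gives $w_i\in\partial^{\infty}f_i(\infty)$. The $w_i$ sum to $\lim u_k/T_k=0$ and one has unit norm, again contradicting \eqref{qualification-condition}. Boundedness of the $u_{i,k}$ then allows us to extract $u_{i,k}\to u_i$, and Proposition \ref{pro42} (first relation) places $u_i\in\partial f_i(\infty)$, yielding $u=u_1+\ldots+u_m$.

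The second inclusion follows the same template, starting from the scaling characterization: any $u\in\partial^{\infty}(f_1+\ldots+f_m)(\infty)$ may be written as $\lim r_k w_k$ with $r_k\downarrow 0$, $x_k\to\infty$, and $w_k\in\partial(f_1+\ldots+f_m)(x_k)$. The qualification argument of the previous step still yields $w_k=w_{1,k}+\ldots+w_{m,k}$ with $w_{i,k}\in\partial f_i(x_k)$ for large $k$, and the same normalization trick (now applied to $r_k w_{i,k}$) shows these vectors are bounded; any cluster point $v_i$ lies in $\partial^{\infty}f_i(\infty)$ by Proposition \ref{pro42} (second relation), and the desired decomposition $u=v_1+\ldots+v_m$ follows. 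The main obstacle is the repeated normalization-and-contradiction gymnastics; they hinge on the conical structure of $\partial^{\infty}$ and the scaling form of Proposition \ref{pro42}, which together allow one to translate any breakdown at the finite level $x_k$ into a breakdown of \eqref{qualification-condition} at infinity.
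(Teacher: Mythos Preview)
The paper does not actually prove this proposition; it is stated with a reference ``cf.\ \cite[Proposition~4.9]{Kim-Tung-Son-23}'' and no proof is given in the present paper, so there is nothing to compare your argument against directly.

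That said, your approach is sound and is the natural way to lift the finite-point sum rule to infinity via Proposition~\ref{pro42}. The two normalization/contradiction steps---one to force the pointwise qualification at $x_k$ for large $k$, and one to force boundedness of the decomposed pieces---are exactly what is needed, and your appeals to the second and third relations of Proposition~\ref{pro42} are in the right places. One small remark: in the qualification step you invoke the inclusion $\Limsup_{x\to\infty}\partial^{\infty}f_i(x)\subset\partial^{\infty}f_i(\infty)$ from Proposition~\ref{pro42}; this is only an inclusion, but that is the direction you need, so the argument goes through. Similarly, in the singular case the scalars $r_k/S_k$ need only tend to $0^+$ (not monotonically) for the Painlev\'e--Kuratowski outer limit characterization to apply, which is fine after passing to a subsequence.
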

\begin{remark}
	{\rm Clearly, by Proposition \ref{pro52}, the subdifferential qualification condition \eqref{qualification-condition} holds at infinity if functions $f_1, \ldots, f_m$, except, possibly, one are Lipschitz at infinity. 
		
	}
\end{remark}

We now present a necessary optimality condition at infinity for a general optimization problem of the form
\begin{equation}\label{problem-0}
	\min_{x\in\Omega} f(x),\tag{P$_0$}
\end{equation}
where $f \colon \mathbb{R}^n \to \overline{\mathbb{R}}$ is a l.s.c. function and  $\Omega$ is a nonempty and closed subset of $\R^n$.

\begin{theorem}[{Fermat rule, cf. \cite[Theorem 6.1]{Kim-Tung-Son-23}}] \label{theo56} 
	Assume that the following conditions hold:
	\begin{enumerate}[\rm (i)]
		\item $\mathrm{dom} f \cap \Omega$ is unbounded.
		\item $\partial^{\infty}f(\infty) \cap \big(-N_{\Omega}(\infty) \big) =\{0\}.$
		\item $f$ is bounded from below on $\Omega,$ i.e., $f_* := \inf_{x\in \Omega} f(x)$ is finite.
	\end{enumerate}
	If there exists a sequence $x_k\xrightarrow{\Omega} \infty$ such that $f(x_k)\to f_*$, then 
	\begin{eqnarray*}
		0 &\in& \partial f(\infty) + N_{\Omega}(\infty).
	\end{eqnarray*}
\end{theorem}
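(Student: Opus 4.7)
The plan is to reduce the constrained problem to an unconstrained one via the indicator function $\delta_\Omega$, apply Ekeland's variational principle to the given minimizing sequence at infinity, and then pass to the limit using the limiting representation of $\partial(\cdot)(\infty)$ (Proposition \ref{pro42}) together with the sum rule at infinity (Proposition \ref{pro54}).

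First I would set $\varphi:=f+\delta_\Omega$. Since $f$ is l.s.c.\ and $\Omega$ is closed, $\varphi$ is l.s.c.; hypothesis (i) gives $\mathrm{dom}\,\varphi=\mathrm{dom}\,f\cap\Omega\neq\emptyset$; and hypothesis (iii) gives $\inf_{\mathbb{R}^n}\varphi=f_*$. The hypothesized sequence then satisfies $\varphi(x_k)=f(x_k)\to f_*$. Let $\epsilon_k:=\varphi(x_k)-f_*$; discarding the indices with $\epsilon_k=0$ (at such $x_k$ the classical Fermat rule Lemma \ref{lema22} already produces $0\in\partial\varphi(x_k)$, which is a valid approximating pair for $\partial\varphi(\infty)$ because $\|x_k\|\to\infty$), I may assume $\epsilon_k\downarrow 0^+$. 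Applying the Ekeland variational principle (Lemma \ref{lema26}) with $\lambda_k:=\sqrt{\epsilon_k}$ yields $y_k\in\mathbb{R}^n$ satisfying $\varphi(y_k)\le\varphi(x_k)$, $\|y_k-x_k\|\le\sqrt{\epsilon_k}$, and the property that $y_k$ globally minimizes $\varphi(\cdot)+\sqrt{\epsilon_k}\|\cdot-y_k\|$. Since $\varphi(y_k)<+\infty$ we have $y_k\in\Omega\cap\mathrm{dom}\,f$, and since $\|y_k-x_k\|\to 0$ while $\|x_k\|\to\infty$, we also get $\|y_k\|\to\infty$.

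Next I would apply Lemma \ref{lema22} at $y_k$ to the perturbed functional. Since $\sqrt{\epsilon_k}\|\cdot-y_k\|$ is Lipschitz on $\mathbb{R}^n$ with constant $\sqrt{\epsilon_k}$, its singular subdifferential vanishes and the Mordukhovich sum rule at the point $y_k$ applies without any qualification, giving $0\in\partial\varphi(y_k)+\sqrt{\epsilon_k}\mathbb{B}$. This provides $v_k\in\partial\varphi(y_k)$ with $\|v_k\|\le\sqrt{\epsilon_k}\to 0$ along $y_k\to\infty$. The first identity of Proposition \ref{pro42} applied to $\varphi$ then yields $0\in\partial\varphi(\infty)=\partial(f+\delta_\Omega)(\infty)$.

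Finally I would invoke the sum rule at infinity (Proposition \ref{pro54}) for the pair $f,\delta_\Omega$. By the remark following Proposition \ref{pro42} we have $\partial^\infty\delta_\Omega(\infty)=N_\Omega(\infty)$, so the required qualification condition reduces to $\partial^\infty f(\infty)\cap(-N_\Omega(\infty))=\{0\}$, which is exactly hypothesis (ii). Therefore $\partial(f+\delta_\Omega)(\infty)\subset\partial f(\infty)+N_\Omega(\infty)$, and combined with the previous paragraph this delivers $0\in\partial f(\infty)+N_\Omega(\infty)$. The most delicate step is the Ekeland reduction: one must ensure that the perturbation does not pull the new points $y_k$ back into a bounded region, which is why $\lambda_k$ is taken to tend to zero; the balanced choice $\lambda_k=\sqrt{\epsilon_k}$ simultaneously forces $\|y_k-x_k\|\to 0$ (so that $\|y_k\|\to\infty$) and $\epsilon_k/\lambda_k\to 0$ (so that the subgradient residual vanishes).
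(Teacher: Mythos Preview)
Your proof is correct and follows essentially the same approach as the paper: reduce to the unconstrained problem via $\varphi=f+\delta_\Omega$, apply Ekeland's principle with $\lambda_k=\sqrt{\epsilon_k}$ to obtain near-stationary points going to infinity, invoke Proposition~\ref{pro42} to land in $\partial\varphi(\infty)$, and then split via Proposition~\ref{pro54} using hypothesis~(ii). The only cosmetic differences are that the paper first treats the case $\Omega=\mathbb{R}^n$ separately before reducing the general case to it, and it ensures strict positivity by setting $\epsilon_k:=f(x_k)-f_*+\tfrac{1}{k}$ rather than handling the indices with $\epsilon_k=0$ by a side argument as you do.
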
 
\begin{proof} 
	We first consider the case that $\Omega=\mathbb{R}^n$. Then  $N_{\Omega}(\infty) = \{0\}.$ For each $k\in\mathbb{N}$, we have
	\begin{eqnarray*}
		f_*\leq f(x_k)\leq f_*+\left(f(x_k)-f_*+\frac{1}{k}\right). 
	\end{eqnarray*}
	Clearly, $\epsilon_k:=f(x_k)-f_*+\frac{1}{k}>0$  and $\epsilon_k\to 0$ as $k\to\infty$. Put $\lambda_k:=\sqrt{\epsilon_k}$, then by the Ekeland variational principle (Lemma~\ref{lema26}), there exists $x^\prime_k \in \mathbb{R}^n$ for $k > 0$ such that
	\begin{eqnarray*}
		\|x_k - x^\prime_k\| & \le & \lambda_k, \\
		f(x^\prime_k) &\le& f(x)+\lambda_k\|x-x^\prime_k\| \quad \textrm{for all } \quad x \in \mathbb{R}^n.
	\end{eqnarray*}
	The first inequality and the fact that  $x_k\to\infty$  imply that the sequence $x^\prime_k~\to~\infty$. While the second inequality says that $x^\prime_k$ is a global minimizer of the function $\varphi(\cdot):=f(\cdot)+\lambda_k\|\cdot-x^\prime_k\|$  on $\mathbb{R}^n$. 	By the Fermat rule (Lemma~\ref{lema22}), we obtain
	\begin{eqnarray*}
		0 &\in& \partial \left( f(\cdot)+ \lambda_k\|\cdot-x^\prime_k\|\right)(x^\prime_k).
	\end{eqnarray*}
	By the Lipschitz property of the function $\|\cdot-x^\prime_k\|$ and the sum rule (see \cite[Theorem~2.19]{Mordukhovich2018}), we have 
	\begin{align*}
		0 &\in \partial f(x^\prime_k)+ \lambda_k\partial (\|\cdot-x^\prime_k\|)(x^\prime_k)
		\\
		&=\partial f(x^\prime_k)+ \lambda_k\mathbb{B}
	\end{align*}
	due to $\partial (\|\cdot-x^\prime_k\|)(x^\prime_k)=\mathbb{B}$. Hence, 
	$$0 \in \partial f(x^\prime_k)+ \lambda_k\mathbb{B},$$
	and so there is ${u}_k \in \partial f(x^\prime_k)$ such that $\|{u}_k\| \leq \lambda_k$. Since $\lambda_k\to 0$, by letting $k \to \infty$ and applying Proposition~\ref{pro42}, we obtain  $ 0\in \partial f(\infty).$
	
	We now consider the case that $\Omega$ is an arbitrary subset of $\mathbb{R}^n$. 
	We have
	\begin{eqnarray*}
		f_*=\inf_{x \in \mathbb{R}^n} \left(f + \delta_{\Omega}\right)(x) & = & \inf_{x \in \Omega} f(x) \ > \ -\infty,
	\end{eqnarray*}
	where $\delta_{\Omega} \colon \mathbb{R}^n \to \overline{\mathbb{R}}$ stands for the indicator function of the set $\Omega.$ Clearly, $f(x_k) + \delta_{\Omega}(x_k)\to f_*$. Therefore, $0 \in \partial (f + \delta_{\Omega})(\infty)$ (by the argument employed in the first case). This, together with Proposition~\ref{pro54} and the assumption (ii), yields $0 \in \partial f (\infty) + N_{\Omega}(\infty).$
\end{proof}

The following theorem gives a sufficient condition at infinity for the nonemptiness and compactness of \eqref{problem-0}.
\begin{theorem}\label{abstract-sufficient-theorem} Assume that condition (ii) of Theorem \ref{theo56} is satisfied. If $f$ is bounded from below on $\Omega$ and 
	\begin{eqnarray}\label{sufficient-condition-1}
		0 &\notin& \partial f(\infty) + N_{\Omega}(\infty),
	\end{eqnarray}
	then the global solution set of \eqref{problem-0}, denoted by $\mathrm{sol}\eqref{problem-0}$, is nonempty and compact.
\end{theorem}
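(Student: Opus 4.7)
My plan is to prove both nonemptiness and compactness by contradiction, in each case producing an unbounded sequence to which Theorem~\ref{theo56} applies, thereby forcing $0 \in \partial f(\infty) + N_\Omega(\infty)$ and contradicting~\eqref{sufficient-condition-1}.

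For nonemptiness, I start with the fact that $f_* := \inf_{x \in \Omega} f(x) \in \mathbb{R}$ by hypothesis, so there is a minimizing sequence $\{x_k\} \subset \Omega$ with $f(x_k) \to f_*$. I split into two cases. If $\{x_k\}$ has a bounded subsequence, I extract a further subsequence converging to some $\bar x$; since $\Omega$ is closed, $\bar x \in \Omega$, and lower semicontinuity of $f$ gives $f(\bar x) \le \liminf_k f(x_k) = f_*$, so $\bar x \in \mathrm{sol}\eqref{problem-0}$. Otherwise, $\|x_k\| \to \infty$ along a subsequence, which in particular shows that $\mathrm{dom}\,f \cap \Omega$ is unbounded (since $f(x_k) \to f_* < +\infty$ forces $x_k \in \mathrm{dom}\,f$ eventually). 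All three hypotheses of Theorem~\ref{theo56} are then in force, so I conclude $0 \in \partial f(\infty) + N_\Omega(\infty)$, contradicting~\eqref{sufficient-condition-1}. Hence $\mathrm{sol}\eqref{problem-0} \neq \emptyset$.

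For compactness, closedness is immediate: $\mathrm{sol}\eqref{problem-0} = \{x \in \Omega \mid f(x) \le f_*\}$ is the intersection of the closed set $\Omega$ with the sublevel set of the l.s.c.\ function $f$, hence closed. For boundedness, I again argue by contradiction: if there existed a sequence $\{x_k\} \subset \mathrm{sol}\eqref{problem-0}$ with $\|x_k\| \to \infty$, then $x_k \xrightarrow{\Omega} \infty$ and $f(x_k) = f_* \to f_*$, so the minimizing-at-infinity hypothesis of Theorem~\ref{theo56} is satisfied. Applying it once more yields $0 \in \partial f(\infty) + N_\Omega(\infty)$, again contradicting~\eqref{sufficient-condition-1}. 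Therefore $\mathrm{sol}\eqref{problem-0}$ is bounded and closed, hence compact.

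The main technical point is not so much the logical structure, which is a standard contradiction-plus-Ekeland strategy already embedded in Theorem~\ref{theo56}, but rather the careful verification that the hypotheses of Theorem~\ref{theo56} are inherited from those assumed here. Specifically, I have to ensure that $\mathrm{dom}\,f \cap \Omega$ is unbounded in each contradiction scenario (guaranteed because the constructed sequences lie in $\mathrm{dom}\,f \cap \Omega$ and escape to infinity), and that condition~(ii) of Theorem~\ref{theo56} holds, which is exactly the hypothesis I am given. The third hypothesis of Theorem~\ref{theo56}, namely $f_* > -\infty$, is precisely the boundedness-from-below assumption. No further calculus of normals or subdifferentials beyond what has already been invoked is required.
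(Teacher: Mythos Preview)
Your proof is correct and follows essentially the same approach as the paper: both argue that any minimizing sequence must be bounded (else Theorem~\ref{theo56} would contradict~\eqref{sufficient-condition-1}), extract a convergent subsequence and use lower semicontinuity to obtain a minimizer, then establish compactness via closedness of the sublevel set and boundedness again by contradiction through Theorem~\ref{theo56}. If anything, you are slightly more explicit than the paper in verifying hypothesis~(i) of Theorem~\ref{theo56} within each contradiction scenario.
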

\begin{proof}
	Since $f$ is bounded from below on $\Omega,$  $f_* := \inf_{x\in \Omega} f(x)$ is finite. By the definition of infimum, there exists a sequence $x_k\in \Omega$ such that $f(x_k)\to f_*$ as $k\to\infty$. If the condition \eqref{sufficient-condition-1} holds, then by Theorem \ref{theo56} the sequence $x_k$ is bounded. By passing to a subsequence if necessary we may assume that $x_k$ converges to some $\bar x$. Since  $\Omega$ is closed,  $\bar x\in \Omega$. By the lower semicontinuity of $f$, we have
	\begin{equation*}
		f(\bar x)\leq\liminf_{x\to\bar x} f(x) \leq \lim_{k\to\infty} f(x_k)=f_*.
	\end{equation*}
	This implies that $f(\bar x)=f_*$, or, equivalently, $\bar x$ is a global minimum of $f$ on $\Omega$.  This means that  $\mathrm{sol}\eqref{problem-0}$ is nonempty. Furthermore, we see that 
	$$\mathrm{sol}\eqref{problem-0}=\{x\in\Omega\;|\; f(x)\leq f_*\}=\{x\in\Omega\;|\; f(x)= f_*\}.$$ 
	This set is closed due to the lower semicontinuity of  $f$ and the closedness of $\Omega$. By Theorem \ref{theo56} and \eqref{sufficient-condition-1}, $\mathrm{sol}\eqref{problem-0}$ is bounded and so it is compact.  The proof is complete. 
\end{proof}
\subsection{Subdifferential of maximum functions at infinity} 
In this subsection, we proceed with evaluating limiting and singular subdifferentials at infinity of  maxima of finitely many functions.
\begin{theorem}\label{max-rule-theorem}
	Let $f_1, \ldots, f_m\colon \mathbb{R}^n\to \R$ be     Lipschitz at infinity functions. Then we have
	\begin{align}
		\partial (\max f_i) (\infty) &\subseteq \mathrm{co}\,\left\{\partial f_1(\infty), \ldots,  \partial f_m(\infty)\right\}, \label{max-rule-infinity}
		\\
		\partial^\infty (\max f_i) (\infty) &=\{0\}. \notag
	\end{align} 
\end{theorem}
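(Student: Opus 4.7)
The plan is to establish the two claims separately; the singular one is a quick consequence of Proposition~\ref{pro52}, while the inclusion requires a limiting argument that transports Lemma~\ref{lema23} to infinity.

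For the singular statement, I first observe that $\max f_i$ is itself Lipschitz at infinity. If each $f_i$ is $L_i$-Lipschitz on $\mathbb{R}^n\setminus \mathbb{B}_{R_i}$, then the elementary inequality $|\max_i a_i-\max_i b_i|\leq\max_i|a_i-b_i|$ yields Lipschitzness of $\max f_i$ on $\mathbb{R}^n\setminus \mathbb{B}_R$ with constants $L:=\max_i L_i$ and $R:=\max_i R_i$. Proposition~\ref{pro52} applied to $\max f_i$ then gives $\partial^\infty(\max f_i)(\infty)=\{0\}$.

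For the inclusion \eqref{max-rule-infinity}, I fix $v\in\partial(\max f_i)(\infty)$. By Proposition~\ref{pro42}, there exist sequences $x_k\to\infty$ and $v_k\in\partial(\max f_i)(x_k)$ with $v_k\to v$. For $\|x_k\|$ sufficiently large, each $f_i$ is Lipschitz on a neighborhood of $x_k$ with constant~$L$, so Lemma~\ref{lema23} provides coefficients $\lambda_{i,k}\geq 0$ (indexed over the active set $I(x_k):=\{i:f_i(x_k)=\max_j f_j(x_k)\}$, summing to~$1$) together with subgradients $v_{i,k}\in\partial f_i(x_k)$ such that $v_k=\sum_{i\in I(x_k)}\lambda_{i,k} v_{i,k}$. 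Since $\{1,\ldots,m\}$ has only finitely many subsets, I pass to a subsequence along which $I(x_k)$ stabilizes to a fixed set $I$; for $i\notin I$ I set $\lambda_{i,k}:=0$ and pick $v_{i,k}\in\partial f_i(x_k)$ arbitrarily (this set is nonempty for $k$ large by Proposition~\ref{pro52}). Lipschitzness at infinity gives the uniform bound $\|v_{i,k}\|\leq L$, so another diagonal extraction yields $\lambda_{i,k}\to\lambda_i$ and $v_{i,k}\to w_i$ for every $i$. Proposition~\ref{pro42} then places $w_i\in\partial f_i(\infty)$, and passing to the limit in $v_k=\sum_{i=1}^m\lambda_{i,k} v_{i,k}$ produces $v=\sum_{i=1}^m\lambda_i w_i$ with $\lambda_i\geq 0$ and $\sum_i\lambda_i=1$, which is exactly the desired convex-hull membership.

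The principal obstacle is the bookkeeping around the varying active-index sets $I(x_k)$ and the possibility that some limiting coefficients $\lambda_i$ vanish; this is resolved by stabilizing $I(x_k)$ along a subsequence and by noting that zero coefficients annihilate any particular choice of $v_{i,k}$. The only reason the subsequence extractions succeed is the uniform bound $\|v_{i,k}\|\leq L$ coming from Lipschitzness of each $f_i$ at infinity -- without this hypothesis the argument would not close.
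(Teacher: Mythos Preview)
Your proof is correct and follows essentially the same route as the paper: both verify that the maximum is Lipschitz at infinity (yielding the singular claim via Proposition~\ref{pro52}), then for the inclusion both take sequences via Proposition~\ref{pro42}, apply Lemma~\ref{lema23} at each $x_k$, and pass to the limit using the uniform Lipschitz bound on subgradients. Your stabilization of the active set $I(x_k)$ is slightly more explicit than the paper's version (which simply pads the convex combination with zero coefficients over all indices from the outset), but the argument is the same.
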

\begin{proof} Since $f_i$, $i=1, \ldots, m$, are Lipschitz at infinity, it is easy to check that $(\max f_i)$ is also Lipschitz at infinity. So by Proposition \ref{pro52}, one has $\partial^\infty (\max f_i) (\infty) =\{0\}$. We now take any $z\in\partial (\max f_i) (\infty)$, then there exist sequences $x_k\to \infty$ and $z_k\in  \partial (\max f_i) (x_k)$ such that $z_k\to z$ as $k\to\infty$.  Since $f_i$, $i=1, \ldots, m$, are Lipschitz at infinity, we can assume that these functions are locally Lipschitz around $x_k$ with the same modulus $L>0$ for all $k$ large enough. Hence, $\|u\|\leq L$ for all $u\in \partial f_1(x_k)$,  $i=1, \ldots, m$ and $k$ large enough.  By Lemma \ref{lema23}, we have
	\begin{equation*}
		\partial (\max f_i) (x_k) \subseteq \mathrm{co}\,\left\{\partial f_1(x_k), \ldots,  \partial f_m(x_k)\right\}.
	\end{equation*}
	Hence, there exist $\lambda^i_k\geq 0$, $i=1, \ldots, m$, with $\sum_{i=1}^{m}\lambda^i_k=1$ and $u^i_k\in \partial f_i(x_k)$, $i=1, \ldots, m$ such that $z_k=\sum_{i=1}^{m}\lambda^i_ku^i_k$. By passing to a subsequence if necessary we may assume that $\lambda^i_k\to \lambda_i$ and $u^i_k\to u_i$, $i=1, \ldots, m$, as $k\to\infty$. Hence, $u_i\in\partial^\infty f_i(\infty)$,    $\sum_{i=1}^{m}\lambda_i=1$ and $z=\sum_{i=1}^{m}\lambda_i u_i$. This means that $z\in \mathrm{co}\,\left\{\partial f_1(\infty), \ldots,  \partial f_m(\infty)\right\}$ and thus \eqref{max-rule-infinity} holds. The proof is complete. 
\end{proof}
\section{Optimality conditions at infinity}\label{Section 3}

Let $\Omega$ be a nonempty and closed  subset of $\R^n$, $I:=\{1, \ldots, m\}$ and $J:=\{1, \ldots, p\}$. Let us consider a  minimax programming problem of the following form
\begin{equation}\label{problem}
	\min_{x\in\mathcal{F}} \max_{i\in I} f_i(x),\tag{P}
\end{equation}
where  the feasible set $\mathcal{F}$ is defined by
\begin{equation}\label{feasible-set}
	\mathcal{F}:=\{x\in \Omega\;|\; g_j(x)\leq 0, j\in J\}
\end{equation}
and   $f_i, i\in I$, and $g_j, j\in J$, are  l.s.c functions. 
Denote $f:=(f_1, \ldots, f_m)$, $g:=(g_1, \ldots, g_p)$ and $\varphi:=\max_{i\in I} f_i$. 

{\em Hereafter}, we assume that $\mathcal{F}$ is nonempty and {\em unbounded set}, $f$ and $g$ are {\em Lipschitz at infinity}, and the maxima function $\varphi$ is {\em bounded from below} on $\mathcal{F}$. 

\begin{definition}
	{\rm We say that {\em the constraint qualification condition} (CQ) holds at infinity if there do not exist $\beta_j\geq 0$, $j\in J$, such that $\sum_{j\in J}\beta_j>0$ and 
		\begin{equation}\label{constraint-qualification}
			0\in \sum_{j\in J}\beta_j\partial g_j(\infty)+N_\Omega(\infty).
		\end{equation}
	}
\end{definition}

The following result gives a necessary optimality condition of Karush--Kuhn--Tucker (KKT) type at infinity to problem \eqref{problem}.
\begin{theorem}\label{KKT-necessary-theorem} Assume that the condition (CQ) holds at infinity. If there is a sequence $x_k\xrightarrow{\mathcal{F}}\infty$ such that $\varphi (x_k)\to\varphi_*:=\inf_{x\in \mathcal{F}}\varphi(x)$, then there exist $\alpha:=(\alpha_1, \ldots, \alpha_m)\in\R^m_+\setminus\{0\}$ and $\beta:=(\beta_1, \ldots, \beta_p)\in\R^p_+$ such that  
	\begin{align}\label{equa-7}
		0\in \sum_{i\in I} \alpha_i\partial f_i(\infty)+\sum_{j\in J}\beta_j\partial g_j(\infty)+N_\Omega(\infty).
	\end{align}
\end{theorem}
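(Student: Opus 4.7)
The plan is to reduce the minimax KKT conclusion to a single application of the Fermat rule at infinity (Theorem \ref{theo56}) by introducing an augmented max-type function that simultaneously encodes the objective and the inequality constraints, thereby avoiding any separate computation of $N_\mathcal{F}(\infty)$. Specifically, I would set
\[
\tilde\varphi(x) := \max\bigl\{f_1(x)-\varphi_*,\,\ldots,\,f_m(x)-\varphi_*,\,g_1(x),\,\ldots,\,g_p(x)\bigr\}
\]
and minimize $\tilde\varphi$ over the closed set $\Omega$. Observe that $\tilde\varphi \geq 0$ on $\Omega$: for $x \in \mathcal{F}$ one has $\varphi(x) \geq \varphi_*$, while for $x \in \Omega \setminus \mathcal{F}$ some $g_j(x) > 0$. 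Moreover, along the given sequence $x_k \xrightarrow{\mathcal{F}} \infty$ we have $g_j(x_k) \leq 0$ and $\varphi(x_k) - \varphi_* \to 0^+$, so $\tilde\varphi(x_k) \to 0 = \inf_{x \in \Omega}\tilde\varphi(x)$.

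Next, I would verify the three hypotheses of Theorem \ref{theo56} for $\min_{x\in\Omega}\tilde\varphi$. Since each $f_i$ and $g_j$ is Lipschitz at infinity, so is the shifted maximum $\tilde\varphi$, hence Proposition \ref{pro52} yields $\partial^\infty\tilde\varphi(\infty)=\{0\}$, which makes the qualification (ii) of Theorem \ref{theo56} automatic; the remaining two hypotheses are clear from $\mathcal{F} \subseteq \Omega$ being unbounded and $\tilde\varphi \geq 0$. Theorem \ref{theo56} then gives $0 \in \partial\tilde\varphi(\infty) + N_\Omega(\infty)$. Invoking the max-rule at infinity (Theorem \ref{max-rule-theorem}) together with the invariance of subgradients under constant shifts, I would conclude that there exist $\tilde\alpha_i,\tilde\beta_j \geq 0$ with $\sum_i \tilde\alpha_i + \sum_j \tilde\beta_j = 1$ such that
\[
0 \in \sum_{i\in I}\tilde\alpha_i\,\partial f_i(\infty) + \sum_{j\in J}\tilde\beta_j\,\partial g_j(\infty) + N_\Omega(\infty).
\]

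To close the argument, I would set $\alpha := (\tilde\alpha_i)_{i\in I}$ and $\beta := (\tilde\beta_j)_{j\in J}$ and use (CQ) to rule out $\alpha = 0$: if $\alpha = 0$, then $\sum_j\tilde\beta_j = 1 > 0$ and the displayed inclusion reduces to $0 \in \sum_j \tilde\beta_j\partial g_j(\infty) + N_\Omega(\infty)$ with a strictly positive total $\beta$-weight, directly contradicting the constraint qualification. Hence $\alpha \in \mathbb{R}_+^m \setminus \{0\}$ and $\beta \in \mathbb{R}_+^p$, giving \eqref{equa-7}. The main obstacle I expect is not algebraic but conceptual — choosing the right auxiliary function: a direct application of Theorem \ref{theo56} to $\min_\mathcal{F}\varphi$ would leave a stray $N_\mathcal{F}(\infty)$ term whose decomposition demands a separate intersection calculus (Proposition \ref{intersection-normal-cones}) together with nontrivial upper estimates for each $N_{\{g_j \leq 0\}}(\infty)$; the augmented-max device folds the objective and the constraint calculus into one invocation of Theorem \ref{max-rule-theorem} and thus bypasses this difficulty entirely.
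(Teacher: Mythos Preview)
Your argument is correct and takes a genuinely different route from the paper's. The paper applies Theorem~\ref{theo56} directly to $\min_{\mathcal{F}}\varphi$, obtaining $0\in\partial\varphi(\infty)+N_{\mathcal{F}}(\infty)$, and then spends most of the proof decomposing $N_{\mathcal{F}}(\infty)$: it writes $\mathcal{F}=C\cap\Omega$ with $C=\{x:g_j(x)\le 0\}$, establishes the intermediate inclusion $N_C(\infty)\subset\mathrm{pos}\{\partial g_1(\infty),\ldots,\partial g_p(\infty)\}$ via a limiting argument that already uses (CQ), verifies the normal qualification $N_C(\infty)\cap(-N_\Omega(\infty))=\{0\}$ (again from (CQ)), and finally invokes the intersection rule Proposition~\ref{intersection-normal-cones}. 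Your augmented-max device $\tilde\varphi=\max\{f_i-\varphi_*,\,g_j\}$ sidesteps all of this: the inequality constraints are absorbed into the objective, Theorem~\ref{theo56} is applied once over $\Omega$, and a single call to Theorem~\ref{max-rule-theorem} produces the multipliers, with (CQ) used only at the very end to force $\alpha\neq 0$. Your proof is shorter and conceptually cleaner; on the other hand, the paper's approach isolates the inclusion $N_C(\infty)\subset\mathrm{pos}\{\partial g_j(\infty)\}$ and the estimate $N_{\mathcal{F}}(\infty)\subset\mathrm{pos}\{\partial g_j(\infty)\}+N_\Omega(\infty)$ as byproducts of independent interest, which may be reusable in settings where the objective is not a finite max.
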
  
\begin{proof} Since $f_i$, $i\in I$, are Lipschitz at infinity, $\varphi$ is also Lipschitz at infinity. Hence, all assumptions of Theorem \ref{theo56} are satisfied. Then, one has
	\begin{equation}\label{equa-6}
		0\in \partial \varphi(\infty)+N_{\mathcal{F}}(\infty).
	\end{equation}
	By Theorem \ref{max-rule-theorem}, we have
	\begin{equation}\label{max-rule-subdifferential}
		\partial \varphi (\infty)  \subseteq \mathrm{co}\,\left\{\partial f_1(\infty), \ldots,  \partial f_m(\infty)\right\}. 
	\end{equation} 
	On the other hand, by letting 
	\begin{equation*}
		C:= \{x\in\R^n\;|\; g_j(x)\leq 0, j\in J\},
	\end{equation*}
	then one has $\mathcal{F}=C\cap \Omega$. 
	
	We first claim that
	\begin{equation}\label{equa-4}
		N_C(\infty)\subset \mathrm{pos}\,\{\partial g_1(\infty), \ldots, \partial g_p(\infty)\},
	\end{equation}
	where 
	$$\mathrm{pos}\,\{\partial g_1(\infty), \ldots, \partial g_p(\infty)\}:=\left\{\sum_{i=1}^p\beta_j v_j\,|\, \beta_j\geq 0, v_j\in \partial g_j(\infty), j=1, \ldots, p\right\}.$$
	Indeed, take any $u\in 	N_C(\infty)$. Clearly, $u=0\in \mathrm{pos}\,\{\partial g_1(\infty), \ldots, \partial g_p(\infty)\}$. So we consider the case that $u\neq 0$.  Then there exist sequence $x_k\xrightarrow{C}\infty$ and $u_k\in N_C(x_k)$ such that $u_k\to u$ as $k\to\infty$. Without loss of generality, we may assume that $u_k\neq 0$ for all $k\in\mathbb{N}$. Since $g_j$, $j\in J$, are Lipschitz at infinity and $x_k\xrightarrow{C}\infty$, we may assume that $g_j$, $j\in J$, are locally Lipschitz with the same modulus $L>0$ at $x_k$ for all $k$ large enough.  Then by \cite[Corollary 4.36]{Mordukhovich2006} one has
	\begin{equation*}
		N_C(x_k)\subset \mathrm{pos}\,\{\partial g_1(x_k), \ldots, \partial g_p(x_k)\}
	\end{equation*}
	for all $k$ large enough, i.e., there exist $\beta^j_k\geq 0$ and $v^j_k\in \partial g_j(x_k)$, $j\in J$, such that
	\begin{equation}\label{equa-2}
		u_k=\sum_{j\in J}\beta^j_kv^j_k.
	\end{equation}
	Since $u_k\neq 0$, one has $\beta_k:=\sum_{j\in J}\beta^j_k>0$. Moreover, by the locally Lipschitz property at $x_k$  of $g_j$, $j\in J$,  
	$\|v^j_k\|\leq L$ for all $j\in J$ and $k$ large enough.   Dividing two sides of \eqref{equa-2} by $\beta_k$ we obtain
	\begin{equation}\label{equa-3}
		\dfrac{u_k}{\beta_k}= \sum_{j\in J}\frac{\beta^j_k}{\beta_k}v^j_k.
	\end{equation}
	Since sequences $\frac{\beta^j_k}{\beta_k}$ and $v^j_k$ are bounded, by passing to a subsequence if necessary we can assume that $\frac{\beta^j_k}{\beta_k}\to \beta_j\geq 0$ and $v^j_k\to v_j\in\partial g_j(\infty)$.  
	
	If the sequence $\beta_k$ is unbounded, then $\frac{u_k}{\beta_k}\to 0$. Since $\sum_{j\in J}\frac{\beta^j_k}{\beta_k}=1$, one has $\sum_{j\in J}\beta_j=1$. Now by letting $k\to\infty$ in \eqref{equa-3} we obtain
	\begin{equation*}
		0=\sum_{j\in J}\beta_jv_j,
	\end{equation*}
	contrary to \eqref{constraint-qualification}. Thus the sequence $\beta_k$ is bounded. By passing to a subsequence if necessary we can assume that $\beta_k\to \beta\geq 0$. If $\beta=0$, then it follows from \eqref{equa-2} that $u=\lim_{k\to\infty}u_k=0$, a contradiction. So $\beta>0$. Letting $k\to\infty$ in \eqref{equa-3} we obtain
	\begin{equation*}
		u=\beta\sum_{j\in J}\beta_jv_j\in \mathrm{pos}\,\{\partial g_1(\infty), \ldots, \partial g_p(\infty)\},
	\end{equation*}
	as required. 
	
	We now show that $N_C(\infty)\cap(-N_\Omega(\infty))=\{0\}$ and hence by Proposition \ref{intersection-normal-cones} and \eqref{equa-4} we have
	\begin{align}
		N_\mathcal{F}(\infty)&\subset N_C(\infty)+N_\Omega(\infty)\notag
		\\
		&\subset \mathrm{pos}\,\{\partial g_1(\infty), \ldots, \partial g_p(\infty)\}+N_\Omega(\infty).\label{equa-5}
	\end{align} 
	Indeed, let $u\in N_C(\infty)\cap(-N_\Omega(\infty))$. Then by \eqref{equa-4}, there exist $\beta_j\geq 0$ and $v_j\in \partial g_j(\infty)$, $j\in J$, such that $u=\sum_{j\in J}\beta_j v_j$. Hence,
	\begin{equation*}
		0= \sum_{j\in J}\beta_j v_j-u.
	\end{equation*}
	Since the condition (CQ) holds at infinity, we deduce that $\beta_j=0$, $j\in J$, or, equivalently,  $u=0$, as required.
	
	Combining  \eqref{equa-6}, \eqref{max-rule-subdifferential} and \eqref{equa-5} we obtain
	\begin{equation*}
		0\in \mathrm{co}\,\left\{\partial f_1(\infty), \ldots,  \partial f_m(\infty)\right\}+\mathrm{pos}\,\{\partial g_1(\infty), \ldots, \partial g_p(\infty)\}+N_\Omega(\infty).
	\end{equation*}
	This means that there exist $\alpha:=(\alpha_1, \ldots, \alpha_m)\in\R^m_+\setminus\{0\}$ and $\beta:=(\beta_1, \ldots, \beta_p)\in\R^p_+$ satisfying \eqref{equa-7}. The proof is complete. 
\end{proof}
Let us see a simple example to illustrate the above theorem.
\begin{example}
	{\rm Let  $f_1, f_2, g\colon \R\to \R$ be functions defined respectively by $f_1(x)=\frac{1}{|x|+1}$, $f_2(x)=0$, $g(x)=x$ and let $\Omega=\R$. Is is easy to see that $f_1$, $f_2$ and $g$ are Lipschitz at infinity, $\partial f_1(\infty)=\partial f_2(\infty)=0$, $\partial g(\infty)=1$ and $N_\Omega(\infty)=\{0\}$. Hence, the condition (CQ) holds at infinity. Furthermore, 
		$$\varphi(x)=\max\{f_1(x), f_2(x)\}=\frac{1}{|x|+1}\ \ \ \forall x\in\R.$$
		Clearly, $\varphi_*=\inf_{x\in \mathcal{F}}\varphi(x)=0$ and $\varphi$ does not attain its infimum on $\mathcal{F}$. Then by Theorem \ref{KKT-necessary-theorem}, there exist $(\alpha_1, \alpha_2)\in\R^2_+\setminus\{0\}$ and $\beta\geq 0$ such that
		\begin{equation*}
			0\in \alpha_1\partial f_1(\infty)+\alpha_2\partial f_2(\infty)+\beta \partial g(\infty).
		\end{equation*}
	}
\end{example}
The following theorem gives   sufficient conditions for the nonempty and compactness of the global solution set of \eqref{problem}.   This result is a direct consequence of Theorems \ref{abstract-sufficient-theorem} and \ref{KKT-necessary-theorem}.
\begin{theorem}
	If the condition (CQ) holds at infinity and  
	\begin{equation*}
		0\notin \mathrm{co}\,\left\{\partial f_1(\infty), \ldots,  \partial f_m(\infty)\right\}+\mathrm{pos}\,\{\partial g_1(\infty), \ldots, \partial g_p(\infty)\}+N_\Omega(\infty),
	\end{equation*}
	then the global solution set of  \eqref{problem} is nonempty and compact.
\end{theorem}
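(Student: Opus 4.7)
The plan is to reduce the statement to a direct application of Theorem~\ref{abstract-sufficient-theorem} with the objective $\varphi := \max_{i\in I} f_i$ and the constraint set $\mathcal{F}$. The standing assumptions already provide that $\mathcal{F}$ is nonempty unbounded and that $\varphi$ is bounded below on $\mathcal{F}$, so the game is to verify the two remaining hypotheses of Theorem~\ref{abstract-sufficient-theorem}, namely the singular qualification $\partial^{\infty}\varphi(\infty)\cap(-N_{\mathcal{F}}(\infty))=\{0\}$ and the non-stationarity $0\notin \partial\varphi(\infty)+N_{\mathcal{F}}(\infty)$.

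The singular qualification is immediate: since each $f_i$ is Lipschitz at infinity, Theorem~\ref{max-rule-theorem} gives $\partial^{\infty}\varphi(\infty)=\{0\}$, so the intersection with $-N_{\mathcal{F}}(\infty)$ is trivially $\{0\}$ no matter what $N_{\mathcal{F}}(\infty)$ looks like. For the non-stationarity, I would assemble the two upper estimates that have already been proved. On one hand, Theorem~\ref{max-rule-theorem} yields
\[
\partial\varphi(\infty)\subset \mathrm{co}\,\{\partial f_1(\infty),\ldots,\partial f_m(\infty)\}.
\]
On the other hand, the chain of inclusions established inside the proof of Theorem~\ref{KKT-necessary-theorem} (leading to \eqref{equa-5}), which used (CQ) together with the intersection rule of Proposition~\ref{intersection-normal-cones}, yields
\[
N_{\mathcal{F}}(\infty)\subset \mathrm{pos}\,\{\partial g_1(\infty),\ldots,\partial g_p(\infty)\}+N_{\Omega}(\infty).
\]
Adding these two inclusions and invoking the hypothesis of the theorem produces $0\notin \partial\varphi(\infty)+N_{\mathcal{F}}(\infty)$, and Theorem~\ref{abstract-sufficient-theorem} then finishes the argument by asserting that $\mathrm{sol}\eqref{problem}$ is nonempty and compact.

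There is no real obstacle once the earlier results are in place; the only mildly subtle point is that the bound on $N_{\mathcal{F}}(\infty)$ is not packaged as a separate lemma but is extracted from the middle of the proof of Theorem~\ref{KKT-necessary-theorem}. A clean write-up should therefore either cite that portion of the proof explicitly or, preferably, split the inclusion $N_{\mathcal{F}}(\infty)\subset \mathrm{pos}\,\{\partial g_j(\infty)\}_{j\in J}+N_{\Omega}(\infty)$ out as a standalone preliminary lemma, after which the sufficient result here becomes a one-line corollary of Theorems~\ref{abstract-sufficient-theorem} and~\ref{max-rule-theorem}.
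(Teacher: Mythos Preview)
Your proposal is correct and matches the paper's own justification, which simply states that the result is a direct consequence of Theorems~\ref{abstract-sufficient-theorem} and~\ref{KKT-necessary-theorem}. Your write-up spells out exactly the details one would supply: the singular qualification via $\partial^\infty\varphi(\infty)=\{0\}$ from Theorem~\ref{max-rule-theorem}, and the non-stationarity via the inclusions \eqref{max-rule-subdifferential} and \eqref{equa-5} established inside the proof of Theorem~\ref{KKT-necessary-theorem}; your observation that \eqref{equa-5} could profitably be isolated as a standalone lemma is a fair stylistic remark.
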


\section{Applications to vector optimization problems}\label{Section 4}
In this    section, we consider the following vector optimization problem:
\begin{equation}\label{VOP}
	\mathrm{Min}_{\R^m_+}\,\{f(x)\;|\; x\in\mathcal{F}\},\tag{VP}
\end{equation}
where the feasible set $\mathcal{F}$ is defined by \eqref{feasible-set} and $\R^m_+$ is the nonnegative orthant of $\R^m$ and given by $$\R^m_+:=\{x=(x_1, \ldots, x_m)\in\R^m\;|\; x_i\geq 0, i\in I\}.$$

\begin{definition}
	{\rm Let $y\in \cl f(\mathcal{F})$. We say that
		\begin{enumerate}[(i)]
			\item $y$ is a {\em  weak Pareto (optimal) value} of \eqref{VOP} if
			\begin{equation*}
				f(\mathcal{F})\cap(y-\mathrm{int}\,\R^m_+)=\emptyset.
			\end{equation*}
			The set of all weak Pareto values of \eqref{VOP} is denoted by $\mathrm{val}^w\,\eqref{VOP}$.
			\item  $y$ is a {\em  Pareto (optimal) value} of \eqref{VOP} if
			\begin{equation*}
				f(\mathcal{F})\cap(y-\R^m_+\setminus\{0\})=\emptyset.
			\end{equation*}
			The set of all  Pareto values of \eqref{VOP} is denoted by $\mathrm{val}\,\eqref{VOP}$.
			
			\item A point $\bar x\in\mathcal{F}$ is said to be a {\em  weak Pareto solution} (resp., {\em Pareto solution}) of problem \eqref{VOP} if   $f(\bar x)$ is a weak  Pareto value (resp. Pareto value) of \eqref{VOP}. The set of all weak Pareto solutions (resp., Pareto solutions) of \eqref{VOP} is denoted by $\mathrm{sol}^w\,\eqref{VOP}$ (resp., $\mathrm{sol}\,\eqref{VOP}$).
		\end{enumerate}
	}
\end{definition}
\begin{remark}\rm  By definition, it is clear that
	\begin{equation*}
		\mathrm{val}\,\eqref{VOP} \subset \mathrm{val}^w\,\eqref{VOP} \ \ \text{and}\ \ \mathrm{sol}\,\eqref{VOP}\subset \mathrm{sol}^w\,\eqref{VOP}. 
	\end{equation*}
	
\end{remark}

The following theorem presents a necessary optimality condition of KKT type for a weak Pareto value at infinity of problem \eqref{VOP}. 

\begin{theorem}\label{necessary-VOP} Let $\bar y\in\mathrm{val}^w\eqref{VOP}$. Assume that the condition (CQ) holds at infinity.  If there is a sequence $x_k\xrightarrow{\mathcal{F}}\infty$ such that $f(x_k)\to \bar{y}$, then there exist $\alpha:=(\alpha_1, \ldots, \alpha_m)\in\R^m_+\setminus\{0\}$ and $\beta:=(\beta_1, \ldots, \beta_p)\in\R^p_+$ such that  
	\begin{align}\label{equa-8}
		0\in \sum_{i\in I} \alpha_i\partial f_i(\infty)+\sum_{j\in J}\beta_j\partial g_j(\infty)+N_\Omega(\infty).
	\end{align} 
\end{theorem}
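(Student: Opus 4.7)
The plan is to reduce the vector problem \eqref{VOP} to a scalar minimax problem of the form \eqref{problem} via the standard ``shifted max'' scalarization, and then invoke Theorem~\ref{KKT-necessary-theorem}. Concretely, for each $i\in I$ set $\tilde f_i(x):=f_i(x)-\bar y_i$ and $\tilde\varphi(x):=\max_{i\in I}\tilde f_i(x)$. Shifting by a constant preserves lower semicontinuity and Lipschitzness at infinity, so $\tilde f_1,\ldots,\tilde f_m$ (together with $g_1,\ldots,g_p$) satisfy the blanket assumptions of Section~\ref{Section 3}. Moreover, since the constraint data $g_j,\Omega$ are unchanged, the condition (CQ) at infinity still holds for the scalarized problem.

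Next I would verify that $\tilde\varphi$ attains its infimum on $\mathcal{F}$ at infinity along the given sequence. The weak Pareto property of $\bar y$ says that $f(\mathcal{F})\cap(\bar y-\mathrm{int}\,\R^m_+)=\emptyset$, which is equivalent to
\begin{equation*}
\tilde\varphi(x)=\max_{i\in I}(f_i(x)-\bar y_i)\ge 0\qquad\text{for all } x\in\mathcal{F},
\end{equation*}
so $\tilde\varphi_*:=\inf_{x\in\mathcal{F}}\tilde\varphi(x)\ge 0$. On the other hand, the hypothesis $f(x_k)\to\bar y$ with $x_k\xrightarrow{\mathcal{F}}\infty$ gives $\tilde\varphi(x_k)\to 0$, hence $\tilde\varphi_*=0$ and this value is approached along $x_k$. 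In particular $\tilde\varphi$ is bounded from below on $\mathcal{F}$, so all hypotheses of Theorem~\ref{KKT-necessary-theorem} (applied to $\tilde f_i$, $g_j$, $\Omega$) are met.

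Theorem~\ref{KKT-necessary-theorem} then yields multipliers $\alpha\in\R^m_+\setminus\{0\}$ and $\beta\in\R^p_+$ with
\begin{equation*}
0\in\sum_{i\in I}\alpha_i\,\partial\tilde f_i(\infty)+\sum_{j\in J}\beta_j\,\partial g_j(\infty)+N_\Omega(\infty).
\end{equation*}
The final step is to identify $\partial\tilde f_i(\infty)=\partial f_i(\infty)$: constants are invisible to the limiting subdifferential, so $\partial\tilde f_i(x)=\partial f_i(x)$ for every $x\in\R^n$, and by the limiting representation in Proposition~\ref{pro42} we get $\partial\tilde f_i(\infty)=\Limsup_{x\to\infty}\partial\tilde f_i(x)=\Limsup_{x\to\infty}\partial f_i(x)=\partial f_i(\infty)$. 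Substituting gives exactly \eqref{equa-8}.

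I do not anticipate a serious obstacle: the only point requiring a little care is checking that the scalarization $\tilde\varphi$ really does have infimum zero (rather than something negative) — that is where the weak Pareto optimality of $\bar y$ is used in an essential way, and it is also the reason the approximating sequence $x_k$ along which $f(x_k)\to\bar y$ provides a minimizing sequence for $\tilde\varphi$. Everything else is a direct translation from the minimax setting to the vector setting through the constant shift.
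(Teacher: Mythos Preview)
Your proposal is correct and follows essentially the same route as the paper: shift each $f_i$ by $\bar y_i$, use the weak Pareto property to show the shifted max is nonnegative on $\mathcal{F}$ and has infimum zero attained along $x_k$, then apply Theorem~\ref{KKT-necessary-theorem} and remove the constants from the subdifferentials. Your justification that $\partial\tilde f_i(\infty)=\partial f_i(\infty)$ via Proposition~\ref{pro42} is in fact slightly more explicit than the paper's, which simply asserts the implication.
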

\begin{proof} Assume that $\bar y=(\bar y_1, \ldots, \bar y_m)\in \mathrm{val}^w\eqref{VOP}$. For each $i\in I$ let $\hat{f}_i$ be the function defined by 
	$$\hat{f}_i(x)=f_i(x)-\bar y_i,\ \ \ \forall x\in\R^n.$$
	Clearly,  functions $\hat{f}_i$, $i\in I$, are Lipschitz at infinity. We now consider the following minimax programming problem:
	\begin{equation}\label{aux-minimax-problem}
		\min_{x\in\mathcal{F}}\max_{i\in I} \hat{f}_i(x).
	\end{equation}
	For each $x\in\R^n$, put $\widehat{\varphi}(x):=\max_{i\in I} \hat{f}_i(x)$. 
	We will show that $\widehat{\varphi}$ is bounded from below and  zero is the optimal value of problem  \eqref{aux-minimax-problem}. To do this, let us first prove that
	\begin{equation}\label{equa-9}
		\widehat{\varphi}(x)\geq 0\ \ \ \forall x\in\mathcal{F}.
	\end{equation} 
	Indeed, if \eqref{equa-9} does not hold, then there exists $x^\prime\in\mathcal{F}$ such that $\hat{\varphi}(x^\prime)<0$. This means that
	\begin{equation*}
		f_i(x^\prime) -\bar y_i<0, \ \ \forall i\in I,
	\end{equation*}  
	or, equivalently,
	\begin{equation*}
		f(x^\prime)\in (\bar y-\mathrm{int}\,\R^m_+),
	\end{equation*}
	which contradicts the fact that $\bar y$ is a weak Pareto value of \eqref{VOP}. Now since $f(x_k)\to \bar y$, one has  $f_i(x_k)-y_i\to 0$, $i\in I$, as $k\to\infty$. Combining this with the following estimate
	\begin{equation*}
		0\leq \widehat{\varphi} (x_k)\leq \sum_{i\in I}|f_i(x_k)-\bar y_i|
	\end{equation*}
	we obtain $\widehat{\varphi}(x_k)\to 0$ as $k\to\infty$. Hence,   zero is the optimal value of problem  \eqref{aux-minimax-problem}.
	
	By Theorem \ref{KKT-necessary-theorem}, there exist $\alpha:=(\alpha_1, \ldots, \alpha_m)\in\R^m_+\setminus\{0\}$ and $\beta:=(\beta_1, \ldots, \beta_p)\in\R^p_+$ such that  
	\begin{align}\label{equa-10}
		0\in \sum_{i\in I} \alpha_i\partial \hat{f}_i(\infty)+\sum_{j\in J}\beta_j\partial g_j(\infty)+N_\Omega(\infty).
	\end{align} 
	Clearly, condition \eqref{equa-10} implies \eqref{equa-8}  and  we thus complete the proof of the theorem.
\end{proof}

The next result  gives a sufficient condition for the nonemptiness and compactness of the weak  Pareto solution  set of \eqref{VOP}.
\begin{theorem}\label{Thrm-sufficient-1} Assume that $\mathrm{val}^w\eqref{VOP}$ is nonempty. If the condition (CQ) holds at infinity and  
	\begin{equation}\label{equa-13}
		0\notin \mathrm{co}\,\left\{\partial f_1(\infty), \ldots,  \partial f_m(\infty)\right\}+\mathrm{pos}\,\{\partial g_1(\infty), \ldots, \partial g_p(\infty)\}+N_\Omega(\infty),
	\end{equation}
	then $\mathrm{sol}^w\eqref{VOP}$ is nonempty and compact. 
\end{theorem}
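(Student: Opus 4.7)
The strategy is to apply the sufficient criterion for the minimax problem \eqref{problem} (the unnumbered theorem immediately preceding Section~\ref{Section 4}) to a scalarized subproblem built as in the proof of Theorem~\ref{necessary-VOP}. To show $\mathrm{sol}^w\eqref{VOP}$ is nonempty, I would fix any $\bar y=(\bar y_1,\ldots,\bar y_m)\in\mathrm{val}^w\eqref{VOP}$ and form $\hat f_i:=f_i-\bar y_i$ and $\hat\varphi:=\max_i\hat f_i$. The definition of a weak Pareto value gives $\hat\varphi\ge 0$ on $\mathcal{F}$, and $\bar y\in\cl f(\mathcal{F})$ forces $\inf_{\mathcal{F}}\hat\varphi=0$. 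Since constant translation leaves subdifferentials at infinity unchanged, $\partial\hat f_i(\infty)=\partial f_i(\infty)$, so (CQ) together with \eqref{equa-13} are exactly the minimax-sufficient hypotheses for $\min_{\mathcal{F}}\hat\varphi$; that theorem then yields that $S_{\bar y}:=\{x\in\mathcal{F}:f(x)\le\bar y\}$ is nonempty and compact. Any $x\in S_{\bar y}$ is a weak Pareto solution, since $f(x')<f(x)\le\bar y$ componentwise would place $f(x')$ in $\bar y-\mathrm{int}\,\R^m_+$, contradicting $\bar y\in\mathrm{val}^w\eqref{VOP}$. Hence $\mathrm{sol}^w\eqref{VOP}\supseteq S_{\bar y}\ne\emptyset$.

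For closedness of $\mathrm{sol}^w\eqref{VOP}$, if $x_k\in\mathrm{sol}^w\eqref{VOP}$ with $x_k\to\bar x$, closedness of $\mathcal{F}$ (which follows from lower semicontinuity of the $g_j$ and closedness of $\Omega$) forces $\bar x\in\mathcal{F}$. Assuming, for contradiction, some $x'\in\mathcal{F}$ with $f(x')<f(\bar x)$ componentwise, I would pick $\delta>0$ so that $f_i(x')+\delta<f_i(\bar x)$ for every $i$; the lower-semicontinuity estimate $f_i(x_k)\ge f_i(\bar x)-\delta/2$ (valid for large $k$) then forces $f(x')<f(x_k)$ componentwise, contradicting $x_k\in\mathrm{sol}^w\eqref{VOP}$.

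For boundedness I would argue by contradiction: suppose $x_k\in\mathrm{sol}^w\eqref{VOP}$ has $\|x_k\|\to\infty$ and set $\bar y_k:=f(x_k)$. If $\{\bar y_k\}$ has a bounded subsequence (WLOG $\bar y_k\to\bar y$), the same closedness-type argument shows $\bar y\in\mathrm{val}^w\eqref{VOP}$; Theorem~\ref{necessary-VOP} applied along $x_k\xrightarrow{\mathcal{F}}\infty$ with $f(x_k)\to\bar y$ produces $\alpha\in\R^m_+\setminus\{0\}$ and $\beta\in\R^p_+$ satisfying \eqref{equa-8}, and rescaling by $\sum_i\alpha_i>0$ yields exactly the inclusion forbidden by \eqref{equa-13}. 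The main obstacle is the complementary case $\|\bar y_k\|\to\infty$, where Theorem~\ref{necessary-VOP} cannot be invoked with a fixed target $\bar y$. The fix I have in mind is to replace $\bar y$ by the moving target $\hat\varphi_k(x):=\max_i(f_i(x)-f_i(x_k))$: weak Pareto optimality of $x_k$ yields $\hat\varphi_k\ge 0$ on $\mathcal{F}$ and $\hat\varphi_k(x_k)=0$, so each $x_k$ minimizes $\hat\varphi_k$ over $\mathcal{F}$. Combining the finite Fermat rule at $x_k$ with Lemma~\ref{lema23} (all indices are active at $x_k$) and the sum/intersection rules produces multipliers $(\alpha^k,\beta^k)$ with $\sum_i\alpha^k_i=1$; passing to the limit via Proposition~\ref{pro42} and handling a possibly unbounded $\sum_j\beta^k_j$ by the same dichotomy used in the proof of Theorem~\ref{KKT-necessary-theorem} (normalize by that sum and invoke (CQ) if it blows up) one arrives at $0\in\mathrm{co}\{\partial f_1(\infty),\ldots,\partial f_m(\infty)\}+\mathrm{pos}\{\partial g_1(\infty),\ldots,\partial g_p(\infty)\}+N_\Omega(\infty)$, again contradicting \eqref{equa-13}.
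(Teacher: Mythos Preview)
Your argument is correct and follows the same architecture as the paper's proof, but the packaging differs in two places worth noting.

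\textbf{Nonemptiness.} You invoke the minimax sufficient theorem (the corollary of Theorem~\ref{abstract-sufficient-theorem} at the end of Section~\ref{Section 3}) on the shifted problem $\min_{\mathcal{F}}\max_i(f_i-\bar y_i)$ and read off a minimizer in $S_{\bar y}$. The paper instead takes $x_k\in\mathcal{F}$ with $f(x_k)\to\bar y$, uses the contrapositive of Theorem~\ref{necessary-VOP} together with \eqref{equa-13} to force $\{x_k\}$ bounded, and extracts a limit point $\bar x$ with $f(\bar x)\le\bar y$. Both routes funnel through the same underlying Fermat-at-infinity machinery; yours is marginally more direct.

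\textbf{Boundedness.} Here you make life harder than necessary. The case split on whether $\{f(x_k)\}$ is bounded is superfluous: your ``moving target'' argument (each $x_k$ minimizes $\max_i(f_i(\cdot)-f_i(x_k))$ over $\mathcal{F}$) works uniformly, and this is exactly what the paper does without any case distinction. More substantively, the paper does not try to decompose $N_{\mathcal{F}}(x_k)$ via finite-point sum/intersection rules (which would require you to first check a constraint qualification at each $x_k$, something your sketch skips). Instead it applies \cite[Corollary~6.6]{Mordukhovich2018} directly at $x_k$ to obtain Fritz--John multipliers with the joint normalization $\sum_i\alpha^i_k+\sum_j\beta^j_k=1$; passing to the limit then yields either a violation of (CQ) (if all limiting $\alpha_i$ vanish) or of \eqref{equa-13} (if not). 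Your version with $\sum_i\alpha^k_i=1$ and a separate dichotomy on $\sum_j\beta^k_j$ can be made to work, but you still owe the reader the finite-$x_k$ qualification (or, equivalently, the observation that its failure along $x_k\to\infty$ already contradicts (CQ) at infinity). The paper's Fritz--John route avoids that detour entirely.

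Closedness is handled identically in both proofs.
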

\begin{proof} We first claim that $\mathrm{sol}^w\eqref{VOP}$ is nonempty. Indeed, let $\bar y\in\mathrm{val}^w\eqref{VOP}$. Then there exists a sequence $x_k\in{\mathcal{F}}$ such that $f(x_k)\to \bar y$. By assumptions of the theorem and Theorem \ref{necessary-VOP}, the sequence $x_k$ is bounded and so we may assume that $x_k$ converges to some $\bar x\in \mathcal{F}$. By the lower semicontinuity of functions $f_i$ at $\bar x$,  we have
	\begin{equation*}
		f_i(\bar x)\leq \liminf_{x\to\bar x}f_i(x)\leq \lim_{k\to\infty}f_i(x_k)=\bar{y}_i \ \ \forall i\in I.
	\end{equation*} 
	This means that $f(\bar x)\in \bar y-\R^m_+$. We claim that $\bar x\in \mathrm{sol}^w\eqref{VOP}$. This means that $\mathrm{sol}^w\eqref{VOP}$ is nonempty.  Indeed, if otherwise, then there exists $x\in \mathcal{F}$ such that $f(x)\in f(\bar x) -\mathrm{int}\R^m_+$. This implies that
	\begin{align*}
		f(x)\in f(\bar x) -\mathrm{int}\R^m_+ &\subset (\bar y-\R^m_+)-\mathrm{int}\R^m_+
		\\
		&\subset \bar y-\mathrm{int}\R^m_+,
	\end{align*}	
	which contradicts the fact that $\bar y\in\mathrm{val}^w\eqref{VOP}$.  
	
	Now we will prove that  $\mathrm{sol}^w\eqref{VOP}$ is bounded. Suppose on the contrary there exists a sequence $x_k$ in  $\mathrm{sol}^w\eqref{VOP}$ such that $x_k\to\infty$ as $k\to\infty$.  Using similar arguments applied in the proof of Theorem \ref{necessary-VOP}, for each $k$ we have $x_k\in \mathrm{argmin}_{\mathcal{F}}\,\max_{i\in I}(f_i(x)-f_i(x_k))$. By \cite[Corollary 6.6]{Mordukhovich2018}, for $k$ large enough there exist $\alpha^i_k\geq 0$, $u^i_k\in \partial f_i(x_k)$, $i\in I$, $\beta^j_k\geq 0$, $v^j_k\in\partial g_j(x_k)$, $j\in J$, and $w_k\in N_\Omega(x_k)$ such that $\sum_{i\in I}\alpha^i_k+\sum_{j\in J}\beta^j_k=1$ and
	\begin{equation}\label{equa-11}
		\sum_{i\in I}\alpha^i_ku^i_k+\sum_{j\in J}\beta^j_kv^j_k+w_k=0.
	\end{equation}
	By the Lipschitz at infinity property  of $f$ and $g$,  sequences $u^i_k$ and $v^j_k$ are bounded for all $k$ large enough. By passing to a subsequence if necessary we can assume that $\alpha^i_k\to \alpha_i$, $\beta^j_k\to \beta_j$, $u^i_k\to u_i\in\partial f_i(\infty)$, $v^j_k\to v_j\in\partial g_j(\infty)$ such that $\sum_{i\in I}\alpha_i+\sum_{j\in J}\beta_j=1$. By \eqref{equa-11}, the sequence $w_k$ also converges to some $w\in N_{\Omega}(\infty)$.
	Hence,
	\begin{equation}\label{equa-12}
		\sum_{i\in I}\alpha_i u_i+\sum_{j\in J}\beta_j v_j+w=0.
	\end{equation}
	If $\alpha_i=0$ for all $i\in I$, then by \eqref{equa-12}, $0\in \sum_{j\in J}\beta_j\partial g_j(\infty)+N_\Omega(\infty)$, where $\sum_{j\in J}\beta_j=1$, which contradicts to the fact that the condition (CQ)    holds  at infinity. Hence, $\sum_{i\in I}\alpha_i>0$ and so
	\begin{equation*}
		0\in \mathrm{co}\,\left\{\partial f_1(\infty), \ldots,  \partial f_m(\infty)\right\}+\mathrm{pos}\,\{\partial g_1(\infty), \ldots, \partial g_p(\infty)\}+N_\Omega(\infty),
	\end{equation*}
	contrary to \eqref{equa-13}, as required.  
	
	To finish the proof, we need to show that $\mathrm{sol}^w\eqref{VOP}$ is closed and so this set is compact. To proceed, let $x_k$ be a sequence in $\mathrm{sol}^w\eqref{VOP}$ that converges to some $\bar x$. We claim that $\bar x\in \mathrm{sol}^w\eqref{VOP}$ and so $\mathrm{sol}^w\eqref{VOP}$ is closed. Indeed, if otherwise, $\bar x\notin \mathrm{sol}^w\eqref{VOP}$, then by definition there exist $x^\prime\in\mathcal{F}$ such that 
	$$f(x^\prime)\in f(\bar x)-\mathrm{int}\, \R^m_+,$$
	or, equivalently,
	\begin{equation}\label{equa-14}
		f_i(x^\prime)<f_i(\bar x) \ \ \ \forall i\in I.
	\end{equation}
	Now by the lower semicontinuity of $f_i$  at $\bar x$,   we have
	\begin{equation*}
		f_i(\bar x)\leq \liminf_{x\to\bar x}f_i(x)\leq \lim_{k\to\infty}f_i(x_k) \ \ \ \forall i\in I.
	\end{equation*}
	This and \eqref{equa-14} imply that for all $k$ large enough
	\begin{equation*}
		f_i(x^\prime)<f_i(x_k) \ \ \ \forall i\in I.
	\end{equation*}
	Hence, 
	$$f(x^\prime)\in f(x_k)-\mathrm{int}\,\R^m_+ \ \ \text{for all}\ \ k \ \ \text{large enough},$$ 
	which contradicts to the fact that $x_k\in \mathrm{sol}^w\eqref{VOP}$. The proof is complete.
\end{proof}

We finish this section by presenting a result that gives a sufficient condition for the nonemptiness and boundedness of the Pareto solution  set of \eqref{VOP}.
\begin{theorem}\label{Thrm-sufficient-2} Assume that $\mathrm{val}\eqref{VOP}$ is nonempty. If the condition (CQ) holds at infinity and  
	\begin{equation*} 
		0\notin \mathrm{co}\,\left\{\partial f_1(\infty), \ldots,  \partial f_m(\infty)\right\}+\mathrm{pos}\,\{\partial g_1(\infty), \ldots, \partial g_p(\infty)\}+N_\Omega(\infty),
	\end{equation*}
	then $\mathrm{sol}\eqref{VOP}$ is nonempty and bounded. 
\end{theorem}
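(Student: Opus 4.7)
The plan is to mimic the two-part structure of Theorem \ref{Thrm-sufficient-1}, replacing weak Pareto reasoning with genuine Pareto reasoning, and to use the inclusion $\mathrm{val}\eqref{VOP}\subset\mathrm{val}^w\eqref{VOP}$ (hence also $\mathrm{sol}\eqref{VOP}\subset\mathrm{sol}^w\eqref{VOP}$) to import the compactness result from the weak Pareto case. For nonemptiness, I would pick $\bar y=(\bar y_1,\ldots,\bar y_m)\in\mathrm{val}\eqref{VOP}$ and choose a sequence $x_k\in\mathcal{F}$ with $f(x_k)\to\bar y$. Since $\mathrm{val}\eqref{VOP}\subset\mathrm{val}^w\eqref{VOP}$, Theorem \ref{necessary-VOP} applies: if $x_k\to\infty$ along some subsequence, one gets multipliers $\alpha\in\mathbb{R}^m_+\setminus\{0\}$ and $\beta\in\mathbb{R}^p_+$ verifying \eqref{equa-8}, which is exactly the inclusion ruled out by the standing hypothesis. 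Hence $\{x_k\}$ must be bounded, and after passing to a subsequence $x_k\to\bar x$, where $\bar x\in\mathcal{F}$ by closedness of $\mathcal{F}$.

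Lower semicontinuity of each $f_i$ at $\bar x$ gives $f_i(\bar x)\le\liminf_{k\to\infty}f_i(x_k)=\bar y_i$ for every $i\in I$, i.e.\ $f(\bar x)\in\bar y-\mathbb{R}^m_+$. Because $\bar y$ is a Pareto value, we must have $f(\bar x)=\bar y$; otherwise $f(\bar x)\in\bar y-\mathbb{R}^m_+\setminus\{0\}$, contradicting $f(\mathcal{F})\cap(\bar y-\mathbb{R}^m_+\setminus\{0\})=\emptyset$. I then claim $\bar x\in\mathrm{sol}\eqref{VOP}$: if not, there is $x'\in\mathcal{F}$ with $f(x')\in f(\bar x)-\mathbb{R}^m_+\setminus\{0\}=\bar y-\mathbb{R}^m_+\setminus\{0\}$, again violating Pareto-ness of $\bar y$. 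This establishes $\mathrm{sol}\eqref{VOP}\neq\emptyset$.

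For boundedness, the quickest route is to observe that $\mathrm{val}\eqref{VOP}\neq\emptyset$ implies $\mathrm{val}^w\eqref{VOP}\neq\emptyset$, so Theorem \ref{Thrm-sufficient-1} is directly applicable under the same CQ and non-inclusion hypothesis. It yields that $\mathrm{sol}^w\eqref{VOP}$ is compact, and in particular bounded. The inclusion $\mathrm{sol}\eqref{VOP}\subset\mathrm{sol}^w\eqref{VOP}$ then gives boundedness of $\mathrm{sol}\eqref{VOP}$. The main subtlety, and the only nontrivial step, is the upgrade from $f(\bar x)\le\bar y$ (componentwise) to the equality $f(\bar x)=\bar y$, which is what lets us promote $\bar x$ from a potential weak-Pareto solution to a genuine Pareto solution; this is exactly where Pareto-ness of the limit value $\bar y$ is essential and is used twice. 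Note that closedness of $\mathrm{sol}\eqml{VOP}$ is not asserted (and generally fails), which is why the theorem states only nonemptiness and boundedness, not compactness as in Theorem \ref{Thrm-sufficient-1}.
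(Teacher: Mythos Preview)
Your proof is correct and follows essentially the same approach as the paper: both extract a bounded subsequence via Theorem~\ref{necessary-VOP}, use lower semicontinuity to get $f(\bar x)\in\bar y-\mathbb{R}^m_+$, argue by contradiction that $\bar x$ is a Pareto solution, and then deduce boundedness from Theorem~\ref{Thrm-sufficient-1} via the inclusion $\mathrm{sol}\eqref{VOP}\subset\mathrm{sol}^w\eqref{VOP}$. The only cosmetic difference is that you first establish $f(\bar x)=\bar y$ before the contradiction, whereas the paper skips this and uses the cone arithmetic $(\bar y-\mathbb{R}^m_+)-(\mathbb{R}^m_+\setminus\{0\})\subset\bar y-(\mathbb{R}^m_+\setminus\{0\})$ directly.
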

\begin{proof} Let $\bar y\in \mathrm{val}\eqref{VOP}$. Then, by a similar argument as in the proof of Theorem \ref{Thrm-sufficient-1}, there exists a sequence $x_k\in \mathcal{F}$ such that $f(x_k)\to \bar y$, $x_k$ converges to some $\bar x\in \mathcal{F}$ and $f(\bar x)\in \bar y-\mathbb{R}^m_+$. We show that $\bar x\in \mathrm{sol}\eqref{VOP}$, i.e., $\mathrm{sol}\eqref{VOP}$ is nonempty. Indeed, if otherwise, there is a point $x\in\mathcal{F}$ satisfying $f(x)\in f(\bar x)-\mathbb{R}^m_+\setminus\{0\}$. Hence,
	\begin{align*}
		f(x)\in f(\bar x) -\mathbb{R}^m_+\setminus\{0\}&\subset (\bar y-\R^m_+)-\mathbb{R}^m_+\setminus\{0\}
		\\
		&\subset \bar y-\mathbb{R}^m_+\setminus\{0\},
	\end{align*}	
	which contradicts the fact that $\bar y\in\mathrm{val}\eqref{VOP}$. The bounded of $\mathrm{sol}\eqref{VOP}$ follows directly from Theorem \ref{Thrm-sufficient-1} and the fact that $\mathrm{sol}\eqref{VOP}\subset \mathrm{sol}^w\eqref{VOP}$.	
\end{proof}
\section{Conclusions}
In this paper, necessary and sufficient optimality condition  at infinity for nonsmooth minimax programming problems are investigated. The tools used here are the limiting subdifferential and normal cone   at infinity.  The obtained results are then applied to nonsmooth vector optimization problems. Regarding to Theorem \ref{Thrm-sufficient-2}, the closedness of $\mathrm{sol}\eqref{VOP}$ is still unclear and we leave here as an open question for future research.

\section*{Acknowledgments} A part of this work was done while the first author was visiting Department of Applied Mathematics, Pukyong National University, Busan, Korea in May 2023. The  author would like to thank the department for hospitality and support during their stay.

\section*{Funding} The first author is  supported by Vietnam National Foundation for Science and Technology Development (NAFOSTED), Grant  No. 101.01-2023.23.

\end{document}